\documentclass{article}
\usepackage{amssymb}
\usepackage{amsthm}
\usepackage[margin=1.25in]{geometry}
\usepackage{color}
\usepackage{caption}
\usepackage{subcaption}

\usepackage{mathtools}
\numberwithin{equation}{section}
\mathtoolsset{showonlyrefs=true}


\usepackage{amsfonts}
\usepackage{graphicx}%
\setcounter{MaxMatrixCols}{30}
\providecommand{\U}[1]{\protect\rule{.1in}{.1in}}
\newtheorem{theorem}{Theorem}[section]

\newtheorem{lemma}[theorem]{Lemma}

\newtheorem{proposition}[theorem]{Proposition}



\newcommand{\e}{\varepsilon}
\newcommand{\dx}{\, dx}
\newcommand{\dt}{\, dt}
\newcommand{\ds}{\, ds}

\newcommand{\I}{\mathcal{I}_\Omega}
\newcommand{\V}{\mathfrak{v}}

\newcommand{\gto}{\xrightarrow{\Gamma}}
\newcommand{\Iloc}{\I^{E_0,\delta}}
\newcommand{\LL}{\mathcal{L}}
\newcommand{\R}{\mathbb{R}}
\newcommand{\HH}{\mathcal{H}}

\title{A Note Regarding Second-Order $\Gamma$-limits for the Cahn--Hilliard Functional}
\author{Giovanni Leoni\\
        Carnegie Mellon University \\
        Pittsburgh, PA, USA
        \and Ryan Murray\\
        Penn State University \\
        University Park, PA, USA}

\begin{document}

\maketitle

\begin{abstract}
This note completely resolves the asymptotic development of order $2$ by $\Gamma$-convergence of the mass-constrained Cahn--Hilliard functional, by showing that one of the critical assumptions of the authors' previous work \cite{LeoniMurray} is unnecessary.
\end{abstract}

{\bf{Keywords:}} Second-order $\Gamma$-convergence, Rearrangement, Cahn--Hilliard functional.

{\bf{AMS Mathematics Subject Classification:}} 49J45.

\section{Introduction}

In the recent paper \cite{LeoniMurray} we have solved in most cases a long standing open problem, namely, the asymptotic development by $\Gamma$-convergence of order $2$ of the Modica--Mortola or Cahn--Hilliard functional  (see \cite{GurtinMatano,Modica87,Sternberg88})
\begin{equation} 
F_\e(u) := \int_\Omega W(u) + \e^2 |\nabla u|^2 \dx,\qquad u \in H^1(\Omega), \label{F0Definition}
\end{equation}
subject to the mass constraint
\begin{equation} \label{massConstraintEquation}
\int_\Omega u \dx = m. 
\end{equation}
Here $\Omega \subset \mathbb{R}^n$, $2\le n \leq 7,$ is an open, connected, bounded set with
\begin{equation}\label{domainAssumptions}
\mathcal{L}^n(\Omega) =1 \quad \text{ and } \quad \partial \Omega \text{ is of class }  C^{2,\alpha}, \quad \alpha \in (0,1],  
\end{equation}
and the double-well potential $W: \mathbb{R} \to [0,\infty)$ satisfies:
\begin{align}
\label{W_Smooth}&W \text{ is of class $C^2(\mathbb{R}\backslash \{a,b\})$ and has precisely two zeros at } a<b, \\
\label{WPrime_At_Wells}&\lim_{s \to a} \frac{W''(s)}{|s-a|^{q-1}} = \lim_{s \to b}\frac{W''(s)}{|s-b|^{q-1}} := \ell > 0, \quad q \in (0,1] ,\\
\label{W_Number_Zeros} &\text{ $W'$ has exactly 3 zeros  at $a<c<b$,} \quad W''(c)<0, \\
\label{WGurtin_Assumption}& \liminf_{|s| \to \infty} |W'(s)| > 0.
\end{align}
 We assume that the mass $m$ in \eqref{massConstraintEquation} satisfies
\begin{equation} \label{originalMassRange}
a<m<b.
\end{equation}

We recall that given a metric space $X$ and a family of functions $\mathcal{F}_\e : X \to \overline{ \mathbb{R}}, \hspace{1mm}\e > 0$, an \emph{asymptotic development} of \emph{order k}
\begin{equation}\label{asymptoticDevelopmentDefinition}
\mathcal{F}_\e = \mathcal{F}^{(0)} + \e\mathcal{F}^{(1)} + \dots + \e^k\mathcal{F}^{(k)} + o(\e^k)
\end{equation}
holds if there exist functions $\mathcal{F}^{(i)}:X\to\overline{\mathbb{R}}$, $i = 0,1,\dots,k$, such that the functions
\begin{equation} \label{higherOrderFunctionalDefinition}
\mathcal{F}^{(i)}_\e := \frac{\mathcal{F}^{(i-1)}_\e-\inf_X \mathcal{F}^{(i-1)}}{\e}
\end{equation}
are well-defined and
\begin{equation} \label{higherOrderGammaConvergenceDefinition}
\mathcal{F}^{(i)}_\e \xrightarrow{\Gamma} \mathcal{F}^{(i)},
\end{equation}
where $\mathcal{F}^{(0)}_\e := \mathcal{F}_\e$ and $\overline{\mathbb{R}}$ is the extended real line (see \cite{AnzellottiBaldo}). 

In our case $X:= L^1(\Omega)$ and we define
\begin{equation}\label{formalFunctionalDefinition}
\mathcal{F}_\e(u) := \begin{cases}
F_\e(u) &\text{ if } u \in H^1(\Omega) \text{ and } (\ref{massConstraintEquation})\text{  holds},\\
\infty &\text{ otherwise in } L^1(\Omega),
\end{cases}
\end{equation}
where $F_\e$ is the functional in \eqref{F0Definition}. It is well-known (see \cite{CarrGurtinSlemrod},  \cite{Modica87}, \cite{Sternberg88}) that, under appropriate assumptions on $\Omega$ and $W$, the $\Gamma$-limit $\mathcal{F}^{(1)}$ of order $1$ (see \eqref{higherOrderFunctionalDefinition} and \eqref{higherOrderGammaConvergenceDefinition}) of \eqref{formalFunctionalDefinition} is given by
\begin{equation} \label{firstOrderFormalDefinition}
\mathcal{F}^{(1)}(u) := \begin{cases}
2c_W\operatorname*{P}(\{u=a\};\Omega) &\text{ if } u \in BV(\Omega;\{a,b\}) \text{ and \eqref{massConstraintEquation} holds},\\
\infty &\text{ otherwise in } L^1(\Omega),
\end{cases}
\end{equation}
where $\operatorname*{P}(\cdot;\Omega)$ is the perimeter in $\Omega$ (see \cite{AmbrosioFuscoPallara,EvansGariepy,Ziemer}), $a,b$ are the wells of $W$, and 
\begin{equation} \label{c0Definition}
c_W := \int_a^b W^{1/2}(s)  \ds.
\end{equation}
Hence, $u \in BV(\Omega;\{a,b\})$ is a minimizer of the functional $\mathcal{F}^{(1)}$ in \eqref{firstOrderFormalDefinition} if and only if the set $\{u=a\}$ is a solution of the classical \emph{partition problem} 
\begin{equation}\label{eqn:iso-func}
 \I(\V):= \min \{P(E;\Omega) : E \subset \Omega, \mathcal{L}^n(E) = \V\}
\end{equation}
at the value $\V=\V_m$, where (see \eqref{domainAssumptions})
\begin{equation}\label{partitionProblemMassConstraint}
\V_m := \frac{b - m}{b-a} .
\end{equation}
When $\Omega$ is bounded and of class $C^2$,  minimizers $E$ of \eqref{eqn:iso-func} exist, have constant generalized mean curvature $\kappa_E$, intersect the boundary of $\Omega$ orthogonally, and their singular set is empty if $n\leq 7$, and has dimension at most $n-8$ if $n \geq 8$ (see \cite{GMT83,GruterBoundaryRegularity, MaggiBook,SternbergZumbrunIsoPer}). Here and in what follows we use the convention that $\kappa_E$ is the average of the principal curvatures taken with respect to the outward unit normal to $\partial E$.

Under the hypothesis that the \emph{isoperimetric function} $\V\mapsto \I(\V)$ satisfies the Taylor expansion
\begin{equation} \label{isoFunctionSmooth}
\I(\V) = \I(\V_m) + \I'(\V_m)(\V-\V_m) + O(|\V-\V_m|^{1+\beta})
\end{equation}
for all $\V$ close to $\V_m$ and for some $\beta \in (0,1]$, in \cite{LeoniMurray} we proved the following theorems (see also \cite{DalMasoFonsecaLeoni}).
\begin{theorem}[\cite{LeoniMurray}]\label{mainThm1}
        Assume that $\Omega,m,W$ satisfy hypotheses \eqref{domainAssumptions}-\eqref{originalMassRange} with $q =1$ and that \eqref{isoFunctionSmooth} holds. Then
        \begin{equation}\label{mainThmEqn1}
        \mathcal{F}^{(2)}(u) = \frac{2c_W^2(n-1)^2}{W''(a)(b-a)^2}\kappa_u^2 + 2(c_{\operatorname*{sym}}+c_W\tau_u)(n-1)\kappa_u\operatorname*{P}(\{u=a\};\Omega) 
        \end{equation}
        if $u \in BV(\Omega;\{a,b\})$ is a minimizer of the functional $\mathcal{F}^{(1)}$ in \eqref{formalFunctionalDefinition} and $\mathcal{F}^{(2)}(u) = \infty$ otherwise in $L^1(\Omega)$.
\end{theorem}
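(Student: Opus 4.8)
The plan is to prove the second-order $\Gamma$-convergence $\mathcal{F}^{(2)}_\e \gto \mathcal{F}^{(2)}$ by establishing the $\liminf$ and $\limsup$ inequalities separately, after identifying the effective domain of $\mathcal{F}^{(2)}$. From \eqref{higherOrderFunctionalDefinition}, since $\inf_X\mathcal{F}^{(0)} = 0$ (realised by any sharp-interface configuration of mass $m$) and the classical Modica--Mortola theorem gives $\mathcal{F}^{(1)}_\e = \e^{-1}\mathcal{F}_\e \gto \mathcal{F}^{(1)}$ with $\inf_X\mathcal{F}^{(1)} = 2c_W\I(\V_m)$, we have $\mathcal{F}^{(2)}_\e(u) = \e^{-1}\big(\e^{-1}F_\e(u) - 2c_W\I(\V_m)\big)$ for $u\in H^1(\Omega)$ obeying \eqref{massConstraintEquation}, and $+\infty$ otherwise. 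If $u_\e\to u$ in $L^1$ with $\sup_\e\mathcal{F}^{(2)}_\e(u_\e)<\infty$, then $\limsup_\e\mathcal{F}^{(1)}_\e(u_\e)\le 2c_W\I(\V_m)$, so the first-order $\Gamma$-liminf inequality forces $\mathcal{F}^{(1)}(u) = \inf_X\mathcal{F}^{(1)}$; hence $u\in BV(\Omega;\{a,b\})$ and $E := \{u=a\}$ solves \eqref{eqn:iso-func} at $\V = \V_m$. By the regularity quoted in the introduction (here $n\le 7$), $M := \partial E\cap\Omega$ is then a smooth closed hypersurface of constant mean curvature $\kappa_u$ meeting $\partial\Omega$ orthogonally, $P(E;\Omega) = \I(\V_m)$, and the first variation of perimeter identifies $\I'(\V_m)$ with $(n-1)\kappa_u$. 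Thus $\mathcal{F}^{(2)}$ is finite precisely on such $u$, and the right-hand side of \eqref{mainThmEqn1} is well defined.

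For the $\limsup$ inequality I would build a recovery sequence adapted to $M$. Let $z_0$ denote the optimal one-dimensional profile, $(z_0')^2 = W(z_0)$, $z_0(\mp\infty) = a,b$, with one-dimensional energy $2c_W$; by \eqref{WPrime_At_Wells} with $q=1$, $z_0$ approaches $a$ and $b$ exponentially, so all one-dimensional moments appearing below converge. Writing points near $M$ in Fermi coordinates $x = y + t\nu(y)$ with $t$ the signed distance, set $u_\e(x) := \zeta_\e\big((t - \e\sigma_\e(y))/\e\big)$, cut off and matched away from $M$ to constant bulk values $a + O(\e)$, $b + O(\e)$ and deformed near $\partial\Omega$ to respect the geometry, where $\zeta_\e$ is $z_0$ modified near its ends, $\sigma_\e$ is a normal translation, and one may also replace $M$ by a nearby constant-mean-curvature surface enclosing a volume $\V_m + O(\e)$. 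Expanding $F_\e(u_\e)$ by means of the Fermi Jacobian $1 - t(n-1)\kappa_u + O(t^2)$, the leading term is $2c_W\e\,P(E;\Omega)$; once the above parameters are tuned to enforce \eqref{massConstraintEquation} and the remaining freedom is optimized, the $\e^2$-coefficient splits into the two terms of \eqref{mainThmEqn1}: a term linear in $\kappa_u$, with coefficient $2(c_{\operatorname*{sym}} + c_W\tau_u)(n-1)$ combining a first moment $c_{\operatorname*{sym}}$ of the profile energy density with the perimeter price $2c_W|\I'(\V_m)|$ of changing the enclosed volume --- this is where \eqref{isoFunctionSmooth} is used and $\tau_u$ is pinned down --- and a term quadratic in $\kappa_u$, whose coefficient $\frac{2c_W^2(n-1)^2}{W''(a)(b-a)^2}$ arises as the minimal cost of the $O(\e)$-scale correction of the bulk phase values that the mass constraint forces once the curvature corrections are included, a cost which is quadratic and, near the wells, governed by $W''(a)$. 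This yields $\limsup_\e\mathcal{F}^{(2)}_\e(u_\e)\le\mathcal{F}^{(2)}(u)$.

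For the $\liminf$ inequality, given $u_\e\to u$ with $\sup_\e\mathcal{F}^{(2)}_\e(u_\e)<\infty$, I would first rearrange: replace $u_\e$ by the function $u_\e^*$ whose super-level sets $\{u_\e^*>s\}$ are minimizers of \eqref{eqn:iso-func} for the volume $\mathcal{L}^n(\{u_\e>s\})$. This preserves the distribution function of $u_\e$, hence $\int_\Omega W(u_\e)\dx$ and the mass constraint; by the coarea formula each super-level perimeter drops to $\I(\mathcal{L}^n(\{u_\e>s\}))$, and a P\'olya--Szeg\H{o}-type inequality --- the delicate point, where quantitative control of $\I$ near $\V_m$ is needed --- gives $\int_\Omega|\nabla u_\e^*|^2\dx\le\int_\Omega|\nabla u_\e|^2\dx$, so that $\mathcal{F}^{(2)}_\e(u_\e^*)\le\mathcal{F}^{(2)}_\e(u_\e)$. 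The rearranged functions depend essentially only on the scalar volume variable, which reduces matters to a one-dimensional problem: equipartition of energy (in the spirit of Modica) forces $u_\e^*$ to lie $\e$-close to a rescaled optimal profile anchored at $\V = \V_m$, and a Taylor expansion of the reduced energy --- estimating the cost of a volume defect $|\V - \V_m|$ via \eqref{isoFunctionSmooth} and imposing \eqref{massConstraintEquation} --- reproduces the right-hand side of \eqref{mainThmEqn1} as a lower bound. Passing to the $\liminf$ and matching the recovery sequence completes the proof.

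The hardest part is the $\liminf$ inequality, and inside it the rearrangement step: one must show that passing to isoperimetric super-level sets does not increase the Dirichlet energy and that the induced one-dimensional reduction still sees the curvature corrections at the correct order. This is exactly where hypothesis \eqref{isoFunctionSmooth} --- the behaviour of $\I$ near $\V_m$ --- enters, and it is the assumption the present note aims to remove. A second, more technical, difficulty is the surgery near $\partial\Omega$, where one must verify that the boundary layer neither stores nor cancels energy of order $\e^2$, using the orthogonality of $M$ and $\partial\Omega$ and the $C^{2,\alpha}$ regularity in \eqref{domainAssumptions}. Finally, the restriction $q=1$ in \eqref{WPrime_At_Wells} is what secures the exponential decay of $z_0$, and hence the convergence of the one-dimensional moments defining $c_{\operatorname*{sym}}$ and the quadratic coefficient.
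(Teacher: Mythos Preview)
This theorem is not proved in the present paper; it is quoted from \cite{LeoniMurray}, and the paper's own contribution (Theorem~\ref{theorem main}) is precisely to remove hypothesis \eqref{isoFunctionSmooth}. That said, your outline matches the strategy of \cite{LeoniMurray} as it is reflected in this paper's proof of Theorem~\ref{theorem main}: a recovery sequence built from the scaled profile $z$ in Fermi coordinates about $M=\partial\{u=a\}\cap\Omega$ for the $\limsup$, and a generalized P\'olya--Szeg\H{o} rearrangement that reduces the $\liminf$ to a weighted one-dimensional functional $G_\e$ whose weight $\eta$ encodes the isoperimetric function.

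One technical point is worth flagging. Your description of the rearrangement---``replace $u_\e$ by the function $u_\e^*$ whose super-level sets are minimizers of \eqref{eqn:iso-func}''---is the right intuition but not how the step is actually carried out: isoperimetric minimizers at different volumes need not be unique or nested, so such a $u_\e^*$ on $\Omega$ is in general ill-defined as an $H^1$ function. The implementation in \cite{LeoniMurray} (mirrored here in Section~3 and the proof of Theorem~\ref{theorem main}) bypasses this by mapping $u_\e$ directly to an increasing one-dimensional function $v_\e=f_{u_\e}$ on an interval $I$, equipped with the weight $\eta=\mathcal{I}\circ V_\Omega$ where $V_\Omega$ solves \eqref{function V}; the P\'olya--Szeg\H{o} inequality then reads $\mathcal{F}_\e^{(1)}(u_\e)\ge G_\e^{(1)}(v_\e)$, and the $\liminf$ analysis proceeds entirely within the weighted one-dimensional problem (Theorem~\ref{thm:1D} here, Section~4 of \cite{LeoniMurray}). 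Hypothesis \eqref{isoFunctionSmooth} enters exactly where you say, to guarantee the required regularity of $\eta$ at $t_0$.
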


\begin{theorem}[\cite{LeoniMurray}]\label{mainThm2}
        Assume that $\Omega,m,W$ satisfy hypotheses \eqref{domainAssumptions}-\eqref{originalMassRange} with $q \in (0,1)$  and that \eqref{isoFunctionSmooth} holds. Then
        \begin{equation}\label{mainThmEqn2}
        \mathcal{F}^{(2)}(u) = 
        2(c_{\operatorname*{sym}}+c_W\tau_u)(n-1)\kappa_u\operatorname*{P}(\{u=a\};\Omega) 
        \end{equation}
                if $u \in BV(\Omega;\{a,b\})$ is a minimizer of the functional $\mathcal{F}^{(1)}$  and $\mathcal{F}^{(2)}(u) = \infty$ otherwise in $L^1(\Omega)$.
\end{theorem}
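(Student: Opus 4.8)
I would prove this via the two–sided scheme for second–order asymptotic developments: a compactness step, followed by the $\Gamma$-$\liminf$ and $\Gamma$-$\limsup$ inequalities for $\mathcal{F}^{(2)}_\e$. By \eqref{higherOrderFunctionalDefinition}, \eqref{firstOrderFormalDefinition} and \eqref{eqn:iso-func}, on admissible $u$ one has
\begin{equation}\label{eq:F2e}
\mathcal{F}^{(2)}_\e(u)=\frac{\mathcal{F}_\e(u)-2c_W\,\e\,\I(\V_m)}{\e^2},
\end{equation}
and $\mathcal{F}^{(2)}_\e(u)=+\infty$ otherwise. \emph{Compactness.} If $u_\e\to u$ in $L^1(\Omega)$ with $\sup_\e\mathcal{F}^{(2)}_\e(u_\e)<\infty$, then $\mathcal{F}_\e(u_\e)=2c_W\e\,\I(\V_m)+O(\e^2)$, so $\mathcal{F}_\e(u_\e)/\e\to 2c_W\I(\V_m)=\inf\mathcal{F}^{(1)}$; the first–order Modica--Mortola theory then forces $u\in BV(\Omega;\{a,b\})$, $\int_\Omega u\dx=m$, and $P(\{u=a\};\Omega)\le\I(\V_m)$, so $E:=\{u=a\}$ is a minimizer of the partition problem \eqref{eqn:iso-func} at volume $\V_m$; hence $\mathcal{F}^{(2)}$ is finite only at minimizers of $\mathcal{F}^{(1)}$. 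Moreover, since $\int_\Omega W(u_\e)\dx\le\mathcal{F}_\e(u_\e)=O(\e)$, the ``intermediate zone'' $\{a+\delta<u_\e<b-\delta\}$ has measure $O(\e)$; combining this with the mass constraint and the isoperimetric inequality one upgrades the convergence of the level–set volumes to $\mathcal{L}^n(\{u_\e>t\})=(1-\V_m)+O(\e)$ uniformly for $t$ on compact subsets of $(a,b)$, which is the quantitative input that makes the remainder in \eqref{isoFunctionSmooth} harmless.

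\emph{Lower bound.} Fix a minimizer $u$ of $\mathcal{F}^{(1)}$, so that $E=\{u=a\}$ has, since $2\le n\le 7$, empty singular set, constant mean curvature $\kappa_u$, and meets $\partial\Omega$ orthogonally. For $u_\e\to u$ with $\liminf_\e\mathcal{F}^{(2)}_\e(u_\e)<\infty$ I would use the identity $W(u_\e)+\e^2|\nabla u_\e|^2=2\e\sqrt{W(u_\e)}\,|\nabla u_\e|+(\sqrt{W(u_\e)}-\e|\nabla u_\e|)^2$, the coarea formula, the chain $\mathcal{H}^{n-1}(\{u_\e=t\}\cap\Omega)\ge P(\{u_\e>t\};\Omega)\ge\I(\mathcal{L}^n(\{u_\e>t\}))$, and the layer–cake identity $\int_a^b\mathcal{L}^n(\{u_\e>t\})\dt=m-a$ encoding \eqref{massConstraintEquation}. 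Writing $v_\e(t):=\mathcal{L}^n(\{u_\e>t\})$ and discarding the nonnegative ``defect'' term, \eqref{eq:F2e} gives
\begin{equation}\label{eq:lb}
\mathcal{F}^{(2)}_\e(u_\e)\ \ge\ \frac{2}{\e}\int_a^b\sqrt{W(t)}\,\bigl[\I(v_\e(t))-\I(\V_m)\bigr]\dt+o(1),
\end{equation}
the level sets with $t\notin[a,b]$ contributing a negligible amount. Since $v_\e(t)\to 1-\V_m$ at rate $O(\e)$ on compacts and $\I(1-\V_m)=\I(\V_m)$, hypothesis \eqref{isoFunctionSmooth} yields, after integration, $\int_a^b\sqrt{W}\,[\I(v_\e)-\I(\V_m)]\dt=\I'(1-\V_m)\int_a^b\sqrt{W}\,(v_\e-(1-\V_m))\dt+o(\e)$; using that $\I'(1-\V_m)=-\I'(\V_m)$ coincides, by the first variation of perimeter at the volume–constrained minimizer, with the mean curvature $(n-1)\kappa_u$ up to sign and the normalization of \eqref{partitionProblemMassConstraint}, and estimating the remaining weighted integral of $v_\e-(1-\V_m)$ through the mass identity and the monotonicity of $v_\e$, one obtains $\liminf_\e\mathcal{F}^{(2)}_\e(u_\e)\ge 2(c_{\operatorname*{sym}}+c_W\tau_u)(n-1)\kappa_u\,P(\{u=a\};\Omega)$.

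\emph{Upper bound.} I would take the recovery sequence $u_\e(x):=\bar z\bigl((d(x)-s_\e)/\e\bigr)$, where $d$ is the signed distance to $\partial E$, $\bar z$ is the optimal one–dimensional profile solving $\bar z'=\sqrt{W(\bar z)}$ -- which, crucially for $q\in(0,1)$, reaches the wells $a,b$ on a \emph{bounded} interval, so all tail estimates are trivial -- and $s_\e=O(\e)$ is chosen so that $\int_\Omega u_\e\dx=m$ exactly; the leading value of $s_\e$ is determined because the mass defect of the transition layer is an affine function of the shift, and by \eqref{isoFunctionSmooth} the perimeter response to this $O(\e)$ shift is $2c_W\e\,\I(\V_m)+2c_W\e\,\I'(\V_m)\cdot O(\e)+o(\e^2)$, producing a first $(n-1)\kappa_u\,P(\{u=a\};\Omega)$ contribution at order $\e^2$. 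Expanding $\mathcal{F}_\e(u_\e)$ in Fermi coordinates around $\partial E$, whose Jacobian expansion produces $(n-1)\kappa_u$ at first order in the normal variable, and substituting the stretched variable, yields the leading term $2c_W\e\,P(\{u=a\};\Omega)=2c_W\e\,\I(\V_m)$ and an $\e^2$ term equal to $(n-1)\kappa_u\,P(\{u=a\};\Omega)$ times an explicit combination of moments of $\bar z$; after using $s_\e=O(\e)$ and the mass relation, this matches $2(c_{\operatorname*{sym}}+c_W\tau_u)(n-1)\kappa_u\,P(\{u=a\};\Omega)$. The essential feature of the regime $q<1$ is that this naive profile is already optimal to order $\e^2$: the degeneracy of the wells admits no $O(\e)$ resonant correction of $\bar z$, so no extra $\kappa_u^2$ term arises -- equivalently, the coefficient $\tfrac{2c_W^2(n-1)^2}{W''(a)(b-a)^2}$ of Theorem~\ref{mainThm1} formally vanishes because $W''\to+\infty$ at the wells when $q\in(0,1)$.

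\emph{Main obstacle.} I expect the crux to be the precise accounting of the mass constraint against the isoperimetric deficit: one must show that absorbing the unavoidable $O(\e)$ mass discrepancy of a near–optimal configuration costs, to order $\e^2$, exactly what the corresponding volume change of the isoperimetric set $E$ costs -- which is precisely what \eqref{isoFunctionSmooth} encodes through $\I'(\V_m)$ -- while ruling out cheaper competitors (several components, thin necks, or mass approaching the wells over wide regions, a genuinely new possibility when $q<1$, where the profile decays only polynomially). This requires the quantitative first–order estimate above and the regularity of isoperimetric sets for $n\le 7$. A secondary but real difficulty, special to $q\in(0,1)$, is to control the configuration near the degenerate wells -- where $W^{1/2}$ is merely Hölder of exponent $(q+1)/2<1$ -- so that the defect term in \eqref{eq:lb} and the remainder terms in the upper bound are genuinely $o(\e^2)$; here hypothesis \eqref{WPrime_At_Wells} with $q<1$ is used in an essential way.
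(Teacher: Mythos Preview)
First, a framing remark: this theorem is not proved in the present paper --- it is quoted from \cite{LeoniMurray}, and the contribution here (Theorem~\ref{theorem main}) is precisely to drop hypothesis \eqref{isoFunctionSmooth}. Still, the proof of Theorem~\ref{theorem main}, together with the many pointers to \cite{LeoniMurray}, makes the method behind Theorem~\ref{mainThm2} visible, and it is \emph{not} the direct coarea approach you outline. Instead one applies a generalized P\'olya--Szeg\H{o} rearrangement (Remark~3.11 and Theorem~5.1 of \cite{LeoniMurray}) to pass from $\mathcal{F}^{(1)}_\e(u_\e)$ to a one--dimensional \emph{weighted} functional $G^{(1)}_\e(v_\e)$ with weight $\eta=\mathcal{I}\circ V_\Omega$, and then performs a detailed asymptotic analysis of the one--dimensional minimizer via its Euler--Lagrange equation \eqref{E-L} (Theorem~\ref{thm:1D} above, and Theorem~4.9 and Lemmas~4.18--4.19 in \cite{LeoniMurray}). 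The constants $c_{\operatorname*{sym}}$ and $\tau_u$ emerge from the convergence of the rescaled one--dimensional minimizer $w_\e\rightharpoonup z(\cdot-\tau_0)$ and the identification of the limiting Lagrange multiplier. Your $\limsup$ construction is in the same spirit as \cite{LeoniMurray} (cf.\ Remark~5.5 there) and is broadly correct.

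Your $\liminf$ argument, however, has a genuine gap. After discarding the nonnegative defect $(\sqrt{W(u_\e)}-\e|\nabla u_\e|)^2$ and applying coarea plus the isoperimetric inequality, you retain only
\[
\frac{2}{\e}\int_a^b\sqrt{W(t)}\,\bigl[\I(v_\e(t))-\I(\V_m)\bigr]\dt,
\]
which after Taylor expansion becomes, up to $o(1)$, $2\I'(\V_m)\int_a^b\sqrt{W(t)}\,g_\e(t)\dt$ with $g_\e(t):=\e^{-1}\bigl((1-v_\e(t))-\V_m\bigr)$. The mass identity constrains only the \emph{unweighted} integral $\int_a^b g_\e\dt$; since $\sqrt{W}$ is nonconstant on $(a,b)$, the weighted integral $\int_a^b\sqrt{W}\,g_\e\dt$ is \emph{not} determined by the mass constraint and the monotonicity of $v_\e$, and over admissible $g_\e$ it can take any real value. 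The constant $c_{\operatorname*{sym}}$ arises precisely because, for the optimal profile, $g_\e$ is pinned near $-P(\{u=a\};\Omega)\,z^{-1}(\cdot)$, so that $\int_a^b\sqrt{W(t)}\,z^{-1}(t)\dt=\int_{\R}W(z(s))\,s\ds=c_{\operatorname*{sym}}$ appears; but what forces a \emph{general} competitor to adopt this profile at order $\e$ is exactly the defect term you dropped. Without retaining and analyzing that term --- or, equivalently, without passing to the full one--dimensional weighted problem and studying its minimizers as in \cite{LeoniMurray} and Theorem~\ref{thm:1D} --- your lower bound is strictly weaker than the target and the argument does not close.
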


Here $\kappa_u$ is the constant mean curvature of the set $\{u=a\}$, 
\begin{equation}\label{c1Definition}
c_{\operatorname*{sym}} := \int_\mathbb{R} W(z(t))t  \dt,
\end{equation}
where $z$ is the solution to the Cauchy problem
\begin{equation} \label{profileCauchyProblem}
\begin{cases} z'(t) = \sqrt{W(z(t))} \quad &\text{ for } t \in \mathbb{R}, \\ z(0) = c,\quad &z(t) \in [a,b],\end{cases}
\end{equation}
with $c$ being the central zero of $W'$ (see \eqref{W_Number_Zeros}), and $\tau_u \in \mathbb{R}$ is a constant such that
\begin{equation}\label{deltaUDefinition}
\operatorname*{P}(\{u=a\};\Omega)\int_\mathbb{R} z(t - \tau_u) - \operatorname*{sgn}\nolimits_{a,b}(t)  \dt = \frac{2c_W(n-1)}{W''(a)(b-a)}\kappa_u
\end{equation}
if $q=1$ in \eqref{WPrime_At_Wells} and
\begin{equation}\label{deltaUDefinitionCase2}
\int_\mathbb{R} z(t - \tau_u) - \operatorname*{sgn}\nolimits_{a,b}(t)  \dt = 0
\end{equation}
if $q\in(0,1)$ in \eqref{WPrime_At_Wells}, where
\begin{equation} \label{sgnabDefinition}
\operatorname*{sgn}\nolimits_{a,b}(t) := \begin{cases}
a &\text{ if } t \leq 0 ,\\
b &\text{ if } t > 0.
\end{cases}
\end{equation}

The assumption \eqref{isoFunctionSmooth} is known to hold at a.e. $\V_m$, or, equivalently, for a.e. mass $m\in (a,b)$, since $\I$ is semi-concave \cite{MurrayRinaldi}. However, in the case that the isoperimetric function is differentiable at $\V_m$ the mean curvature of the interface of minimizers is completely determined since (see Chapter 17 in\cite{MaggiBook}) 
 \begin{equation} \label{isoPerDerivative}
 \I'(\V_m) = (n-1)\kappa_E
 \end{equation}
 for every minimizer $E$ of \eqref{eqn:iso-func} with $\V = \V_m$. 
Hence Theorems \eqref{mainThm1} and \eqref{mainThm2} \emph{do not provide a selection criteria for minimizers.}
Indeed, the case of two global minimizers of the partition problem \eqref{eqn:iso-func} with different mean curvatures is excluded  by \eqref{isoFunctionSmooth}.

The purpose of this note is to \emph{remove the assumption that $\I$ is regular at $\V_m$}. Specifically, the theorem that we prove is the following:

\begin{theorem}\label{theorem main}
        Theorems \eqref{mainThm1} and \eqref{mainThm2} continue to hold without assuming \eqref{isoFunctionSmooth}. 
\end{theorem}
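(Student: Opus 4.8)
The plan is to revisit the argument in \cite{LeoniMurray} and identify exactly where the Taylor expansion \eqref{isoFunctionSmooth} was used, showing that in each instance either a weaker one-sided or averaged estimate suffices, or the semi-concavity of $\I$ together with the structure of minimizers already provides what is needed. Recall that \eqref{isoFunctionSmooth} enters the previous proof in two opposite directions: in the $\limsup$ (upper bound) construction, where one must build a recovery sequence whose energy excess is controlled by the cost of slightly perturbing the mass of a near-minimizing set; and in the $\liminf$ (lower bound), where one needs that a competitor at mass $\V$ close to $\V_m$ pays at least $\I(\V_m) + \I'(\V_m)(\V - \V_m)$ up to a higher-order error. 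The key observation is that since $\I$ is semi-concave, it is differentiable at $\V_m$ \emph{unless} there are distinct left and right derivatives; and by \eqref{isoPerDerivative} applied to the relevant one-sided problems, the one-sided derivatives are realized by mean curvatures of one-sided minimizers. So the right framework is to stratify the recovery sequence according to whether mass is added or removed, and to let the optimal curvature $\kappa_u$ (and the associated translation $\tau_u$) be chosen from the minimizers of the appropriate one-sided partition problem.

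The first step is to state and prove the needed replacement for \eqref{isoFunctionSmooth}: namely, that by semi-concavity \cite{MurrayRinaldi} one always has the two-sided bound
\begin{equation}
\I(\V) \le \I(\V_m) + p\,(\V - \V_m) + C|\V - \V_m|^{1+\beta}
\end{equation}
for every $p$ in the superdifferential $\partial \I(\V_m)$, and that the lower bound $\I(\V) \ge \I(\V_m) + p^{\pm}(\V - \V_m) + o(|\V-\V_m|)$ holds with $p^{+} = \I'_+(\V_m)$ for $\V > \V_m$ and $p^{-} = \I'_-(\V_m)$ for $\V < \V_m$; here semi-concavity gives the quantitative $|\V - \V_m|^{1+\beta}$ rate in the concavity direction, which is the direction that matters for the upper-bound construction. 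The mild subtlety is that $\beta$ is no longer assumed; but semi-concavity gives $\beta = 1$ on the concave side, which is all the $\limsup$ argument ever used, and on the other side only the first-order (differentiability-free) expansion is needed for the $\liminf$.

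The second step is to redo the recovery-sequence construction of \cite{LeoniMurray} using, in place of a single minimizer $E$ of \eqref{eqn:iso-func} at volume $\V_m$, a minimizer of the one-sided problem on whichever side the optimal profile translation $\tau_u$ pushes the mass; the point is that the one-dimensional optimization defining $\tau_u$ via \eqref{deltaUDefinition}–\eqref{deltaUDefinitionCase2} is insensitive to \eqref{isoFunctionSmooth}, and the curvature $\kappa_u$ appearing in \eqref{mainThmEqn1}–\eqref{mainThmEqn2} should be read as the curvature of that one-sided minimizer, which by \eqref{isoPerDerivative} equals $\I'_\pm(\V_m)/(n-1)$. One then checks that the energy expansion of this modified recovery sequence reproduces exactly \eqref{mainThmEqn1} or \eqref{mainThmEqn2}, because the second-order cost of the mass adjustment is precisely $\I''$-type (hence absorbed into the $|\V-\V_m|^{1+\beta}$ error) on the concave side. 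For the lower bound, one runs the same blow-up / slicing analysis as before, but when estimating the defect one splits according to whether the competitor's mass lies above or below $\V_m$ and applies the corresponding one-sided first-order lower bound; the previously-used equality $\I'(\V_m) = (n-1)\kappa_E$ is replaced by the two inequalities $\I'_\pm(\V_m) = (n-1)\kappa_E^\pm$, and the cross term $2(n-1)\kappa_u \tau_u \,\mathrm{P}$ is handled exactly as in \cite{LeoniMurray} since that computation never invoked differentiability of $\I$.

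The main obstacle I expect is the lower bound in the non-differentiable case: one must show that a recovery-type competitor cannot do better than the value dictated by the one-sided minimizer, i.e. that mixing contributions from both sides (adding mass along part of the interface and removing it along another part) cannot beat the one-sided optimum. This requires a convexity/rearrangement argument showing that the map $\V \mapsto \I(\V)$, being semi-concave, makes any such "mixed" strategy suboptimal compared to committing to one side — essentially that the minimum over $p \in \{\,\I'_-(\V_m), \I'_+(\V_m)\,\}$ of the one-sided second-order energies is the true $\Gamma$-limit. The rearrangement techniques already present in \cite{LeoniMurray} (reducing a general competitor to a one-dimensional profile problem on each slice of the interface) should localize the issue to a scalar inequality, which is where semi-concavity does the final work; verifying that the error terms from this rearrangement are genuinely $o(\e^2)$ uniformly, without the crutch of \eqref{isoFunctionSmooth}, is the technically delicate point.
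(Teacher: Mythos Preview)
Your proposal has a genuine gap in the $\liminf$ direction, and it stems from working with the global isoperimetric function $\I$ rather than a localized one. The statement to be proved gives the value of $\mathcal{F}^{(2)}(u)$ for each specific minimizer $u$ of $\mathcal{F}^{(1)}$, with $\kappa_u$ the mean curvature of the particular set $\{u=a\}$. When $\I$ fails \eqref{isoFunctionSmooth}, there may be several global minimizers $E$ at volume $\V_m$ with different curvatures, and for any one of them we have only $(n-1)\kappa_E \in [\I'_+(\V_m),\I'_-(\V_m)]$. Your one-sided lower bound $\I(\V)\ge \I(\V_m)+\I'_\pm(\V_m)(\V-\V_m)$, fed through the rearrangement machinery of \cite{LeoniMurray}, therefore produces a lower bound for $\mathcal{F}^{(2)}$ in terms of $\I'_\pm(\V_m)$, not in terms of $(n-1)\kappa_u$; these numbers need not agree, so the bound will not match the $\limsup$ at $u$. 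In other words, the Polya--Szeg\H{o}-type step compares perimeters of level sets of $u_\e$ against the global $\I$, which is too pessimistic: it does not use the information that those level sets are $L^1$-close to the fixed set $\{u=a\}$. Your reinterpretation of $\kappa_u$ as ``the curvature of the appropriate one-sided minimizer'' changes the statement of the theorem rather than proving it, and your proposed ``minimum over the two sides'' does not give the correct $\Gamma$-limit at a given $u$.

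The paper closes this gap by replacing $\I$ with a \emph{localized} isoperimetric function $\Iloc$ (see \eqref{labelstar3}) that constrains competitors to lie within $\alpha$-distance $\delta$ of $E_0=\{u=a\}$; since $u_\e\to u_{E_0}$ in $L^1$, Proposition~\ref{levelSetsProposition} guarantees that all level sets of $u_\e$ are admissible for $\Iloc$ once $\e$ is small. One then needs two new facts: that $\Iloc$ is semi-concave near $\V_m$ (Proposition~\ref{Prop:Semi-Concave}, whose proof is the main technical work and requires a careful perturbation argument when the $\alpha$-constraint is saturated), and, crucially, that \emph{both} one-sided derivatives $(\Iloc)'_\pm(\V_m)$ converge to $(n-1)\kappa_{E_0}$ as $\delta\to 0$ (Proposition~\ref{lem:delta-to-zero}). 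The one-dimensional reduction (Theorem~\ref{thm:1D}) is then carried out with a weight $\eta$ having a possible kink at $t_0$, giving a lower bound in terms of $\eta'_\pm(t_0)=(\Iloc)'_\pm(\V_m)$, and sending $\delta\to 0$ collapses the interval to the single value $(n-1)\kappa_{E_0}=(n-1)\kappa_u$, matching the $\limsup$. This localization-then-$\delta\to 0$ mechanism is the missing idea in your plan; without it the lower bound cannot recover $\kappa_u$. (Incidentally, the $\limsup$ inequality was already established in \cite{LeoniMurray} without \eqref{isoFunctionSmooth}, so that half of your outline is not needed.)
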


The $\limsup$ portion of this theorem was already established in \cite{LeoniMurray}, see Remark 5.5 in the same. Thus this work focuses on proving the $\Gamma$-$\liminf$ inequality.

Besides its intrinsic interest, Theorem \ref{theorem main}  has important applications in the study of the speed of motion of the mass-preserving Allen--Cahn equation 
\begin{equation} \label{NLAC}
\begin{cases} \partial_t u_\e = \e^2 \Delta u_\e - W'(u_\e) + \e \lambda_\e &\text{ in } \Omega \times [0,\infty), \\
\  \frac{\partial u_\e}{\partial \nu} = 0\qquad &\text{ on } \partial \Omega \times [0,\infty),\\
\ \ \, u_\e = u_{0,\e} \qquad &\text{ on } \Omega \times \{0\}
\end{cases}
\end{equation}
and of the Cahn--Hilliard equation
\begin{equation} \label{CHivp}
\left \{
\begin{aligned}
\partial_t u_{\e} &= - \Delta (\e^2 \Delta u_{\e} - W'(u_{\e}))  &\mbox{in } \Omega \times (0,\infty), \\
\frac{\partial u_{\e}}{\partial \nu} &= \frac{\partial }{\partial \nu}(\e^2 \Delta u_{\e} - W'(u_{\e})) = 0 &\mbox{ on } \partial \Omega \times [0,\infty), \\
u_{\e} &= u_{0,\e} &\mbox{on } \Omega \times \{0 \}
\end{aligned}
\right.
\end{equation}
in dimension $n\ge2$.

In what follows we say that a measurable set $E_0 \subset \Omega$ is a \emph{volume--constrained local perimeter minimizer} of $P(\cdot, \Omega)$ if there exists $\rho > 0$ such that
\[
P(E_0;\Omega) = \inf\left\{ P(E;\Omega): E \subset \Omega \text{ Borel, } \LL^n(E_0) = \LL^n(E), \, \LL^n(E_0 \ominus E)  < \rho\right\},
\]
where $\ominus$ denotes the symmetric difference of sets. 
We define
\begin{equation}
\label{u E0}
u_{E_0}:=a\chi_{E_0}+b\chi_{\Omega\setminus E_0}.
\end{equation}
The following theorem significantly improves Theorems 1.2,   1.4, 1.7, and 1.9  in \cite{MurrayRinaldi}. In particular, the assumption that the local minimizer $E_0$ has positive second variation (see Theorem 1.9 in \cite{MurrayRinaldi}) is no longer needed.

\begin{theorem} \label{theorem ac}
        Assume that $\Omega,m,W$ satisfy hypotheses \eqref{domainAssumptions}-\eqref{originalMassRange} with $q =1$, and let $E_0$ be a volume--constrained local perimeter minimizer with $\mathcal{L}^n(E_0) = \V_m$. Assume that $u_{0,\e} \in L^\infty(\Omega)$ satisfy       
        \begin{equation}
        \int_\Omega u_{0,\e} \dx = m,\qquad     u_{0,\e} \to u_{E_0} \text{ in } L^2(\Omega) \text{ as } \e \to 0^+
        \end{equation}
        and
        \begin{equation}
        \mathcal{F}_\e(u_{0,\e}) \leq \e        \mathcal{F}^{(1)}(u_{E_0}) + C\e^2
        \end{equation}
        for some $C > 0$. Let $u_\e$ be a solution to \eqref{NLAC}. 
        Then, for any $M > 0$,
        \[
        \sup_{0 \leq t \leq M\e^{-1}} || u_{\e}(t) - u_{E_0}||_{L^1(\Omega)} \to 0 \ \text{as} \ \e \to 0^+.
        \]
\end{theorem}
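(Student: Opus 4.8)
The plan is to run an energy--dissipation/confinement argument: the flow \eqref{NLAC} decreases $\mathcal{F}_\e$, and a \emph{uniform} second-order lower bound for $\mathcal{F}_\e$ on a small $L^1$-neighbourhood of $u_{E_0}$ --- furnished by the $\liminf$ half of the localized version of Theorem \ref{theorem main} (the one phrased through the local isoperimetric function $\Iloc$) --- will prevent the trajectory from leaving that neighbourhood on the time scale $\e^{-1}$.

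First I would record the gradient-flow structure. The system \eqref{NLAC} is the $L^2(\Omega)$-gradient flow of $\mathcal{F}_\e$ constrained to $\{\int_\Omega u=m\}$, with $\e\lambda_\e(t)$ the Lagrange multiplier enforcing mass conservation. Testing the equation with $\partial_t u_\e$, integrating by parts using the Neumann condition, and using $\int_\Omega\partial_t u_\e=0$ to annihilate the multiplier term yields, for $0\le t_1\le t_2$,
\[
\mathcal{F}_\e(u_\e(t_2))+\int_{t_1}^{t_2}\!\int_\Omega|\partial_t u_\e|^2\dx\dt=\mathcal{F}_\e(u_\e(t_1)).
\]
With the hypothesis on $u_{0,\e}$ this gives $\mathcal{F}_\e(u_\e(t))\le\e\mathcal{F}^{(1)}(u_{E_0})+C\e^2$ for all $t\ge0$. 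The required uniform-in-$(\e,t)$ $L^\infty$ bound on $u_\e$, an a priori bound on $\lambda_\e$, and the continuity of $t\mapsto\|u_\e(t)-u_{E_0}\|_{L^1(\Omega)}$ I would take from standard parabolic theory together with \eqref{WGurtin_Assumption}, as in \cite{MurrayRinaldi}.

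The core step is a \emph{uniform} lower bound: there exist $\delta_1>0$ (small, in particular $\delta_1<\rho(b-a)$ and below the localization radius) and $C_1>0$ such that $\mathcal{F}_\e(u)\ge\e\mathcal{F}^{(1)}(u_{E_0})-C_1\e^2$ for every $u\in H^1(\Omega)$ with $\int_\Omega u=m$ and $\|u-u_{E_0}\|_{L^1(\Omega)}\le\delta_1$, provided $\e$ is small. I would prove this by contradiction and compactness. If it fails there are $\e_j\to0$ and $u_j$ in that neighbourhood with $\mathcal{F}_{\e_j}(u_j)<\e_j\mathcal{F}^{(1)}(u_{E_0})-j\e_j^2$; since $\mathcal{F}_{\e_j}(u_j)/\e_j$ is bounded, first-order equicoercivity gives (along a subsequence) $u_j\to u_{E_\infty}:=a\chi_{E_\infty}+b\chi_{\Omega\setminus E_\infty}$ in $L^1(\Omega)$, where $E_\infty$ has finite perimeter, $\mathcal{L}^n(E_\infty)=\V_m$, and $\mathcal{L}^n(E_\infty\ominus E_0)=(b-a)^{-1}\|u_{E_\infty}-u_{E_0}\|_{L^1(\Omega)}<\rho$. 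The first-order $\Gamma$-$\liminf$ inequality yields $P(E_\infty;\Omega)\le P(E_0;\Omega)$, while the volume-constrained local minimality of $E_0$ yields the reverse; hence $E_\infty$ is again a volume-constrained perimeter minimizer within the $\rho$-ball around $E_0$, so $u_{E_\infty}$ lies in the finite part of the localized second-order $\Gamma$-limit built on $\Iloc$, where by the formula of Theorem \ref{mainThm1} its value is finite and --- the mean curvatures $\kappa_E$ and perimeters being bounded on that ball --- bounded below uniformly. Applying the localized $\Gamma$-$\liminf$ inequality (precisely the half of Theorem \ref{theorem main} proved \emph{without} \eqref{isoFunctionSmooth}) along $u_j\to u_{E_\infty}$ forces $\liminf_j\big(\mathcal{F}_{\e_j}(u_j)-\e_j\mathcal{F}^{(1)}(u_{E_0})\big)/\e_j^2>-\infty$, contradicting $<-j\to-\infty$.

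Finally I would close with a first-exit argument. Were the conclusion false, along some $\e_j\to0$ there would be $t_j\le M\e_j^{-1}$ with $\|u_{\e_j}(t_j)-u_{E_0}\|_{L^1(\Omega)}\ge\delta_1/2$; by continuity and $\|u_{0,\e_j}-u_{E_0}\|_{L^1(\Omega)}\to0$ there is a first $s_j\le t_j$ with $\|u_{\e_j}(s_j)-u_{E_0}\|_{L^1(\Omega)}=\delta_1/2$, and the uniform bound of the previous paragraph applies on $[0,s_j]$. The dissipation identity then gives $\int_0^{s_j}\!\int_\Omega|\partial_t u_{\e_j}|^2\dx\dt\le(C+C_1)\e_j^2$, so by the Cauchy--Schwarz inequality in time and $s_j\le M\e_j^{-1}$,
\[
\|u_{\e_j}(s_j)-u_{0,\e_j}\|_{L^2(\Omega)}\le s_j^{1/2}\Big(\int_0^{s_j}\!\int_\Omega|\partial_t u_{\e_j}|^2\dx\dt\Big)^{1/2}\le\big(M(C+C_1)\big)^{1/2}\e_j^{1/2}\to0,
\]
whence (using $\mathcal{L}^n(\Omega)=1$) $\|u_{\e_j}(s_j)-u_{E_0}\|_{L^1(\Omega)}\to0$, contradicting $\|u_{\e_j}(s_j)-u_{E_0}\|_{L^1(\Omega)}=\delta_1/2$. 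Thus for $\e$ small the trajectory stays in the $\delta_1/2$-ball, and re-running the same estimate on all of $[0,M\e^{-1}]$ gives $\sup_{0\le t\le M\e^{-1}}\|u_\e(t)-u_{E_0}\|_{L^1(\Omega)}\le\big(M(C+C_1)\big)^{1/2}\e^{1/2}+\|u_{0,\e}-u_{E_0}\|_{L^2(\Omega)}\to0$, which is the assertion. I expect the main obstacle to be the uniform lower bound of the third paragraph --- upgrading the \emph{sequential} localized second-order $\Gamma$-$\liminf$ to an estimate holding uniformly over a fixed neighbourhood of $u_{E_0}$ while $E_0$ is only assumed a (possibly second-variation-degenerate) local minimizer. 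This is exactly where discarding \eqref{isoFunctionSmooth} in Theorem \ref{theorem main} is indispensable, since the competitor limits $u_{E_\infty}$ coming from other minimizers of the local partition problem need not carry the same mean curvature as $E_0$; a secondary, routine point is the a priori control of $\lambda_\e$ behind the $L^\infty$ bounds in the first step.
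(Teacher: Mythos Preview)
Your proposal is correct and follows the same scheme as the paper. The paper's own proof is a one-line deferral to Theorems 1.2 and 1.7 of \cite{MurrayRinaldi}, substituting Theorem \ref{theorem main} of this note for Theorem 1.1 of \cite{LeoniMurray}; what you have written is precisely a reconstruction of that energy--dissipation/first-exit argument from \cite{MurrayRinaldi}, with the localized second-order $\Gamma$-$\liminf$ (built on $\Iloc$) supplying the uniform lower bound that replaces the positive-second-variation hypothesis.
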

The following theorem improves upon Theorem 1.4 in \cite{MurrayRinaldi}.

 \begin{theorem} \label{globalMotion}
        Assume that $\Omega,m,W$ satisfy hypotheses \eqref{domainAssumptions}-\eqref{originalMassRange} with $q =1$ and that there exists a constant $C_1 > 0$ so that
        \begin{equation} \label{w9}
        |W'(s)| \leq C_1 |s|^p + C_1,
        \end{equation}
	where $p = \frac{n}{n-2}$ for $n \geq 3$, and $p > 0$ for $n = 1,2$. Let $E_0$ be a volume--constrained global perimeter minimizer with $\LL^n(E_0) = \V_m$. Assume that $u_{0,\e} \in L^2(\Omega)$ satisfy
        \begin{equation} \label{newL1conv2}
                \int_\Omega u_{0,\e} \dx = m,\qquad     u_{0,\e} \to u_{E_0} \text{ in } (H^1(\Omega))' \text{ as } \e \to 0^+
        \end{equation}
        and
        \begin{equation} \label{energyUpperCH}
        \mathcal{F}_{\e}(u_{0,\e}) \leq \mathcal{F}_0(u_{E_0})\e  + C\e^2
        \end{equation}
        for some $C > 0$. Let $u_\e$ be a solution to \eqref{CHivp}. Then, for any $M > 0$,
        \begin{equation} \label{slowCH}
        \sup_{0 \leq t \leq M\e^{-1}} ||u_\e - u_{E_0}||_{(H^1(\Omega))'} \to 0 \ \text{as} \ \e \to 0^+.
        \end{equation}
 \end{theorem}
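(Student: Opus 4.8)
The plan is to combine the gradient flow structure of the Cahn--Hilliard equation \eqref{CHivp} with a sharp lower bound for $\inf_{L^1(\Omega)}\mathcal{F}_\e$ supplied by Theorem \ref{theorem main}. Under the growth condition \eqref{w9} --- whose exponent $p=n/(n-2)$ is precisely the threshold for which the Sobolev embedding $H^1(\Omega)\hookrightarrow L^{2n/(n-2)}(\Omega)$ forces $W'(u_\e)\in L^2(\Omega)$, so that the chemical potential lies in $H^1(\Omega)$ --- the problem \eqref{CHivp} has a weak solution $u_\e\in C([0,\infty);(H^1(\Omega))')$ with $\partial_t u_\e\in L^2_{\mathrm{loc}}((0,\infty);(H^1(\Omega))')$, $\int_\Omega\partial_t u_\e\,dx=0$, and the Cahn--Hilliard energy--dissipation inequality
\[
\mathcal{F}_\e(u_\e(t)) + c_0\int_0^t\|\partial_t u_\e(s)\|_{(H^1(\Omega))'}^2\,ds \ \le\ \mathcal{F}_\e(u_{0,\e}),\qquad t\ge 0,
\]
holds for a fixed $c_0>0$; this is the integrated form of $\tfrac{d}{dt}\mathcal{F}_\e(u_\e)\le -c_0\|\partial_t u_\e\|_{(H^1(\Omega))'}^2$, since $\partial_t u_\e$ is, up to the normalization relating $\mathcal{F}_\e$ to the functional whose $H^{-1}$ gradient flow is \eqref{CHivp}, the Neumann Laplacian of the chemical potential, whose gradient norm controls $\|\partial_t u_\e\|_{(H^1(\Omega))'}$.

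Next I would prove the quantitative lower bound
\[
\inf_{L^1(\Omega)}\mathcal{F}_\e \ \ge\ \e\,\mathcal{F}^{(1)}(u_{E_0}) - C_1\e^2 \qquad\text{for all small }\e>0,
\]
for some $C_1>0$. Because $E_0$ is a \emph{global} perimeter minimizer with volume $\V_m$, we have $\mathcal{F}_0(u_{E_0})=\mathcal{F}^{(1)}(u_{E_0})=\inf_{L^1(\Omega)}\mathcal{F}^{(1)}=2c_W\I(\V_m)$, so the rescaled functionals $\mathcal{F}^{(2)}_\e$ of \eqref{higherOrderFunctionalDefinition} are well defined, and by Theorem \ref{theorem main} they satisfy the $\Gamma$-$\liminf$ inequality toward $\mathcal{F}^{(2)}$ \emph{without} assuming \eqref{isoFunctionSmooth}. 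If the displayed bound failed, there would be $\e_k\to 0^+$ with $\inf_{L^1(\Omega)}\mathcal{F}^{(2)}_{\e_k}\to-\infty$; taking $v_{\e_k}$ with $\mathcal{F}^{(2)}_{\e_k}(v_{\e_k})\le\inf_{L^1(\Omega)}\mathcal{F}^{(2)}_{\e_k}+1$ gives $\mathcal{F}^{(2)}_{\e_k}(v_{\e_k})\to-\infty$, whence $\mathcal{F}_{\e_k}(v_{\e_k})/\e_k$ is bounded, so by the Modica--Mortola compactness of \cite{LeoniMurray} (which uses \eqref{WGurtin_Assumption}) a subsequence of $(v_{\e_k})$ converges in $L^1(\Omega)$ to some $v$; the $\Gamma$-$\liminf$ inequality then forces $\mathcal{F}^{(2)}(v)\le\liminf_k\mathcal{F}^{(2)}_{\e_k}(v_{\e_k})=-\infty$, which is impossible since $\mathcal{F}^{(2)}$ is $(-\infty,+\infty]$-valued (Theorems \ref{mainThm1} and \ref{mainThm2}). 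Combining this bound with the well-preparedness hypothesis \eqref{energyUpperCH} gives the key estimate
\[
\mathcal{F}_\e(u_{0,\e}) - \inf_{L^1(\Omega)}\mathcal{F}_\e \ \le\ C_2\e^2 .
\]

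Finally, for $0\le t\le M\e^{-1}$, using $u_\e(t)-u_{0,\e}=\int_0^t\partial_t u_\e(s)\,ds$ in $(H^1(\Omega))'$, the Cauchy--Schwarz inequality in time, the energy--dissipation inequality, and the last display,
\[
\|u_\e(t)-u_{0,\e}\|_{(H^1(\Omega))'}\ \le\ t^{1/2}\Big(\tfrac{1}{c_0}\big(\mathcal{F}_\e(u_{0,\e})-\inf_{L^1(\Omega)}\mathcal{F}_\e\big)\Big)^{1/2}\ \le\ \Big(\tfrac{C_2 M}{c_0}\Big)^{1/2}\e^{1/2},
\]
so that, by the triangle inequality and \eqref{newL1conv2},
\[
\sup_{0\le t\le M\e^{-1}}\|u_\e(t)-u_{E_0}\|_{(H^1(\Omega))'}\ \le\ \Big(\tfrac{C_2 M}{c_0}\Big)^{1/2}\e^{1/2} + \|u_{0,\e}-u_{E_0}\|_{(H^1(\Omega))'}\ \longrightarrow\ 0
\]
as $\e\to 0^+$, which is \eqref{slowCH} (and in fact yields the rate $O(\e^{1/2})$).

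The crux --- and the only place where dropping \eqref{isoFunctionSmooth} matters --- is the second step, the $O(\e^2)$ lower bound $\inf_{L^1(\Omega)}\mathcal{F}_\e\ge\e\,\mathcal{F}^{(1)}(u_{E_0})-C_1\e^2$. In \cite{MurrayRinaldi} this was obtained only under \eqref{isoFunctionSmooth}, because without that hypothesis the partition problem may admit several minimizers with distinct mean curvatures, and one cannot rule out a priori that near-minimizers of $\mathcal{F}_\e$ develop an interface of ``anomalous'' curvature which drives $\mathcal{F}^{(2)}_\e$ arbitrarily negative; Theorem \ref{theorem main} asserts precisely that this does not happen (its $\Gamma$-$\liminf$ inequality keeps $\mathcal{F}^{(2)}_\e$ bounded below). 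Everything else --- weak existence and the energy--dissipation inequality for \eqref{CHivp} under \eqref{w9}, and the elementary time Cauchy--Schwarz estimate --- is by now standard; cf. \cite{MurrayRinaldi}.
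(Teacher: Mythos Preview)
Your argument is correct and is precisely the approach taken in the paper: the proof there simply invokes Theorem~1.4 of \cite{MurrayRinaldi} (the energy--dissipation plus Cauchy--Schwarz argument you spell out), replacing the appeal to the earlier second-order $\Gamma$-limit result of \cite{LeoniMurray} by Theorem~\ref{theorem main}, which furnishes the lower bound $\inf_{L^1(\Omega)}\mathcal{F}_\e\ge \e\,\mathcal{F}^{(1)}(u_{E_0})-C_1\e^2$ without assuming \eqref{isoFunctionSmooth}. Your write-up is a faithful expansion of that one-line proof.
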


\section{Localized Isoperimetric Function}

One of the central ideas in \cite{LeoniMurray} was the development and use of a generalized P\'olya--Szeg\H o inequality to reduce the second-order $\Gamma$-$\liminf$ of \eqref{F0Definition} to a one-dimensional problem. This generalized P\'olya--Szeg\H o inequality relied on comparing the perimeter of the level sets of arbitrary functions with values of $\I$. On the one hand, this approach is simple and very general. On the other hand, it is clearly not sharp in our setting because the minimizers of \eqref{eqn:iso-func} may be widely separated in $L^1$, while the transition layers we are considering are known to converge in $L^1$. Hence, the isoperimetric function may be too pessimistic in estimating the perimeter of the level sets of transition layers.

%

In light of this, following \cite{MurrayRinaldi}, we use instead a localized version of the isoperimetric function. Specifically, given a set $E_0$, and some $\delta > 0$, we  define the \emph{local isoperimetric function} of parameter $\delta$ about the set $E_0$ to be
\begin{equation} \label{labelstar3}
\Iloc(\V) := \inf \{ P(E, \Omega):\, E \subset \Omega \text{ Borel, } \LL^n(E) =\V,\, \alpha(E_0,E) \leq \delta    \},
\end{equation}
where
\begin{equation} \label{labelsharp3}
\alpha(E_1,E_2) := \min\{ \LL^n(E_1 \setminus E_2), \LL^n(E_2 \setminus E_1)   \}
\end{equation}
for any Borel sets $E_1,E_2 \subset \Omega$.

The following proposition, which connects the definition of $\Iloc$ with $L^1$ convergence, can be found in \cite{MurrayRinaldi}. We present its proof for the convenience of the reader.

\begin{proposition} \label{levelSetsProposition} Let $\Omega \subset \R^n$ be an open set, $E_0 \subset \Omega$ be a Borel set and let $u_{E_0}$ be as in \eqref{u E0}. Then
\begin{equation} \label{3star}
\alpha(E_0,\{u \leq s\}) \leq \delta
\end{equation}
for all $u \in L^1(\Omega)$ such that
\begin{equation} \label{iso1}
\|u-u_{E_0}\|_{L^1} \leq (b-a)\delta,
\end{equation}
and for every $s \in \R$, where the function $\alpha$ is given in \eqref{labelsharp3}.
\end{proposition}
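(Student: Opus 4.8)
The plan is to argue by contradiction and exploit the simple observation that on the set $\{u \le s\}$ the function $u$ is uniformly close to $a$ only where $E_0$ sits, while on $\{u > s\}$ it should be close to $b$; the $L^1$ smallness of $u - u_{E_0}$ then forces the symmetric differences to be small. Concretely, fix $s \in \R$ and write $A := \{u \le s\} \setminus E_0$ and $B := E_0 \setminus \{u \le s\}$. First I would treat the case $s < b$ (the case $s \ge b$ being symmetric, handled by looking at complements, since then $\{u > s\}$ plays the role above): on $A$ we have $u \le s$ while $u_{E_0} = b$, so $|u - u_{E_0}| \ge b - s$ there; this is not immediately strong enough if $s$ is close to $b$, so instead I would split at the midpoint. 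Set $t := \tfrac{a+b}{2}$ and consider separately whether $s \le t$ or $s > t$.

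If $s \le t$: on $A = \{u \le s\} \setminus E_0 \subset \{u \le t\} \cap (\Omega \setminus E_0)$ we have $u_{E_0} = b$ and $u \le t$, hence $|u - u_{E_0}| \ge b - t = \tfrac{b-a}{2}$; integrating gives $(b-a)\delta \ge \|u - u_{E_0}\|_{L^1} \ge \tfrac{b-a}{2}\LL^n(A)$, so $\LL^n(A) \le 2\delta$. That is off by a factor of two from what is claimed, so the midpoint split is too crude; instead I would not split and argue directly. Redo: with $A = \{u \le s\} \setminus E_0$, on $A$ one has $u_{E_0} = b$ and $u \le s$, so $|u - u_{E_0}| \ge b - s$, giving $\LL^n(A) \le \tfrac{(b-a)\delta}{b - s}$ when $s < b$; and with $B = E_0 \setminus \{u \le s\} = E_0 \cap \{u > s\}$, on $B$ one has $u_{E_0} = a$ and $u > s$, so $|u - u_{E_0}| \ge s - a$, giving $\LL^n(B) \le \tfrac{(b-a)\delta}{s - a}$ when $s > a$. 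Since $(b-s) + (s-a) = b - a$, at least one of $b - s$, $s - a$ is $\ge \tfrac{b-a}{2}$; but to get the clean bound $\min\{\LL^n(A),\LL^n(B)\} \le \delta$ I actually want one of these denominators to be $\ge b - a$, which fails in general. The resolution is that the bound $\alpha(E_0,\{u\le s\}) \le \delta$ only asserts the \emph{minimum} is $\le \delta$, so it suffices to show one of $\LL^n(A), \LL^n(B)$ is $\le \delta$; and indeed if $s \le a$ then $\{u \le s\} \subset \{u_{E_0} = a\}$ up to the set where $u = u_{E_0} = a$... this is getting delicate, so the honest approach is the following clean one.

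The clean argument: for any $s$, on $A = \{u \le s\}\setminus E_0$ we have $u - u_{E_0} = u - b \le s - b$, and on $B = E_0 \setminus \{u\le s\}$ we have $u - u_{E_0} = u - a > s - a$. If $s \le \tfrac{a+b}{2}$, use the $B$ estimate only in the trivial case, but rather observe $\{u \le s\}$ and $E_0$: when $s < b$, the first estimate gives $(b - s)\LL^n(A) \le \|u - u_{E_0}\|_{L^1(\Omega\setminus E_0)} \le (b-a)\delta$; when $s > a$, the second gives $(s-a)\LL^n(B) \le (b-a)\delta$. Now if $s \le a$ then $B = E_0 \cap \{u > s\}$ and on $E_0$ we have $u_{E_0}=a \ge s$, but $u$ could be large; still, on $\Omega \setminus E_0$, $\{u \le s \le a\}$ forces $|u - b| \ge b - a$, so $\LL^n(A) \le \delta$. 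Symmetrically if $s \ge b$ then $\LL^n(B) \le \delta$. Finally if $a < s < b$, WLOG $s \ge \tfrac{a+b}{2}$ (else argue with $A$), so $s - a \ge \tfrac{b-a}{2}$; this only yields $\LL^n(B) \le 2\delta$. Hence the genuinely sharp statement must use that $\{u \le s\}$ flips which of $A,B$ is controlled. I therefore expect the actual proof to distinguish $s \le a$, $a < s < b$, and $s \ge b$, and in the middle range to bound $\LL^n(A)$ using $b - s$ when $s$ is small and $\LL^n(B)$ using $s - a$ when $s$ is large — and the main obstacle is precisely squeezing out the constant $(b-a)\delta$ rather than $2(b-a)\delta$. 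Reexamining: when $a < s < b$ and, say, $s - a \ge b - s$, one has $s - a \ge \tfrac{b-a}{2}$, giving $\LL^n(B) \le 2\delta$ — unless we notice $B \subset \Omega\setminus\{u\le s\} = \{u>s\}$ and on $\{u>s\}\setminus E_0$... I'll stop the calculation here.

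In summary: I would set $A_s = \{u\le s\}\setminus E_0$, $B_s = E_0\setminus\{u\le s\}$, note $\alpha(E_0,\{u\le s\}) = \min\{\LL^n(A_s),\LL^n(B_s)\}$, and prove that always $\min\{\LL^n(A_s),\LL^n(B_s)\}\le\delta$ by the pointwise bounds $|u - u_{E_0}|\ge b - s$ on $A_s$ (valid, trivially improving to $\ge b-a$ when $s\le a$) and $|u-u_{E_0}|\ge s-a$ on $B_s$ (improving to $\ge b-a$ when $s\ge b$), combined with $\int_{A_s}|u-u_{E_0}| + \int_{B_s}|u-u_{E_0}| \le \|u-u_{E_0}\|_{L^1}\le(b-a)\delta$; the cases $s\le a$ and $s\ge b$ give the result immediately from a single term, and for $a<s<b$ the claim follows since $\LL^n(A_s)(b-s) + \LL^n(B_s)(s-a) \le (b-a)\delta$ forces $\min\{\LL^n(A_s),\LL^n(B_s)\}\cdot\big((b-s)+(s-a)\big)\le(b-a)\delta$, i.e.\ $\min\{\LL^n(A_s),\LL^n(B_s)\}\le\delta$. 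The main obstacle is organizing these case distinctions so that the constant comes out exactly right; everything else is a one-line integration estimate.
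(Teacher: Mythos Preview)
Your final ``In summary'' paragraph is correct and is exactly the paper's argument: set $F_s=\{u\le s\}$, use the pointwise bounds $|u-u_{E_0}|\ge b-s$ on $F_s\setminus E_0$ and $|u-u_{E_0}|\ge s-a$ on $E_0\setminus F_s$, and in the range $a<s<b$ observe that $(b-s)\LL^n(F_s\setminus E_0)+(s-a)\LL^n(E_0\setminus F_s)\ge (b-a)\min\{\LL^n(F_s\setminus E_0),\LL^n(E_0\setminus F_s)\}$, while the extreme cases $s\le a$ and $s\ge b$ are handled by a single term. The meandering before that paragraph is unnecessary---the key step you were missing (and then found) is precisely bounding both measures below by their minimum so the coefficients collapse to $b-a$.
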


\begin{proof}
Fix $\delta > 0$, and for $s \in \R$ define $F_s := \{ x \in \Omega: u(x) \leq s   \}$.
If $a < s < b$, then by \eqref{iso1},
\[
\begin{aligned}
(b-a)\delta &\geq \int_{ F_s \setminus E_0 }  |u-u_{E_0}|\dx  + \int_{ E_0 \setminus F_s}  |u-u_{E_0}| dx  \\
&\geq (b-s) \LL^n(F_s \setminus E_0) + (s-a) \LL^n(E_0 \setminus F_s) \geq (b-a)\alpha(E_0,F_s),
\end{aligned}
\]
so that \eqref{3star} is proved in this case. If $s \geq b$, again by \eqref{iso1},
\[
(b-a)\delta \geq \int_{E_0 \setminus F_s} |u - u_{E_0}| dx \geq (s-a) \LL^n(E_0 \setminus F_s) \geq (b-a)\alpha(E_0,F_s).
\] 
The case $s \leq a$ is analogous.
\end{proof}

By construction, we know that $\Iloc \geq \I$. Furthermore, by BV compactness and lower-semicontinuity, and the fact that $\alpha$ is continuous in $L^1$, we have that $\Iloc$ is lower semi-continuous. The next proposition establishes a stronger type of regularity for $\Iloc$.

\begin{proposition} \label{Prop:Semi-Concave} Assume that $\Omega$ satisfies \eqref{domainAssumptions} and let $E_0\subset\Omega$ be a local volume-constrained perimeter minimizer, with $  \LL^n(E_0)=\V_m$. Then for $\delta$ small enough there exists a neighborhood $J_\delta$ of $\V_m$ so that $\Iloc$ is semi-concave on $J_\delta$.
\end{proposition}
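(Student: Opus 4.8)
Let me think about how to prove semi-concavity of the localized isoperimetric function $\Iloc$ on a neighborhood of $\V_m$.

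The standard proof that the (global) isoperimetric function $\I$ is semi-concave near any value where minimizers exist goes via a "pasting" construction: take a minimizer $E$ for volume $\V$, and create competitors for nearby volumes $\V \pm t$ by adding/removing a small ball or by flowing the boundary outward/inward using the regularity of $\partial E$. The key is that for $C^2$ (or $C^{1,\alpha}$) boundary, one can perturb along the normal direction to change volume by $t$ at a perimeter cost of order $|t|$ with a second-order error that is bounded below (hence semi-concavity: $\I(\V+t) + \I(\V-t) - 2\I(\V) \le C t^2$). The regularity theory cited in the excerpt (minimizers of the partition problem have $C^{2,\alpha}$ boundary up to $\partial\Omega$, empty singular set for $n \le 7$, orthogonal intersection) gives exactly what's needed.

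The new twist here is the localization constraint $\alpha(E_0, E) \le \delta$. So the plan is:

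First, establish that for $\V$ in a small enough neighborhood $J_\delta$ of $\V_m$, the infimum in \eqref{labelstar3} is attained, and that any minimizer $E$ for $\Iloc(\V)$ is itself a volume-constrained *local* perimeter minimizer in $\Omega$ — provided $\delta$ is small, the constraint $\alpha(E_0,E) \le \delta$ is not "active" in a way that destroys local minimality. More precisely, since $E_0$ is a volume-constrained local perimeter minimizer with $\LL^n(E_0) = \V_m$, and the competitors allowed in $\Iloc$ are those $L^1$-close to $E_0$ (in the $\alpha$ sense), for $\V$ near $\V_m$ the minimizer $E$ of $\Iloc(\V)$ will satisfy $\alpha(E_0,E)$ strictly less than $\delta$, hence $E$ is an honest local minimizer and the regularity theory applies: $\partial E$ is $C^{2,\alpha}$, meets $\partial\Omega$ orthogonally, has constant mean curvature.

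Second, with such a regular minimizer $E$ in hand for the "base" volume $\V$, build competitors for $\V \pm t$ by a normal perturbation of $\partial E \cap \Omega$ supported away from $\partial\Omega$ (or, if one prefers, using a vector field tangent to $\partial\Omega$): choosing a fixed smooth function $\phi$ on $\partial E$ with $\int_{\partial E} \phi \, d\mathcal{H}^{n-1} = 1$ and flowing for "time" $t$ produces a set $E_t$ with $\LL^n(E_t) = \V + t + O(t^2)$ and $P(E_t;\Omega) = P(E;\Omega) + t\,(n-1)\kappa_E \cdot(\text{sign}) + O(t^2)$, with the $O(t^2)$ controlled uniformly. Re-adjusting the volume exactly (e.g. by composing with a further tiny perturbation) keeps these estimates. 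Crucially, since $E_t \to E$ in $L^1$ as $t \to 0$ and $\alpha(E_0,E) < \delta$, for $|t|$ small we retain $\alpha(E_0,E_t) \le \delta$, so $E_t$ is admissible for $\Iloc(\V \pm t)$. This yields $\Iloc(\V+t) + \Iloc(\V-t) - 2\Iloc(\V) \le C t^2$ with $C$ uniform for $\V \in J_\delta$, which is semi-concavity.

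The main obstacle — and the point requiring the most care — is the first step: showing that minimizers of the *localized* problem are genuine local perimeter minimizers so that the regularity theory is available, i.e. ruling out that the constraint $\alpha(E_0, \cdot) \le \delta$ is saturated. This is where the hypothesis that $E_0$ is a volume-constrained local perimeter minimizer (rather than an arbitrary set) is used: one argues that for $\V$ close to $\V_m$, there is a competitor with $\alpha(E_0, \cdot)$ comfortably below $\delta$ and perimeter close to $P(E_0;\Omega)$ (obtained by perturbing $E_0$ itself), so the minimizer $E$ of $\Iloc(\V)$ has perimeter bounded, hence by BV-compactness a subsequence converges to some set with $\alpha$-distance $\le \delta$ from $E_0$; a density estimate / uniform regularity argument then shows the limit, and eventually $E$ itself, stays strictly inside the constraint. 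A secondary technical point is making the normal-perturbation estimates uniform in the base volume $\V$ over $J_\delta$, which follows from uniform $C^{2,\alpha}$ bounds on the family of minimizers (again from the regularity theory, since all relevant minimizers are close to $E_0$ in $L^1$ and have bounded perimeter).
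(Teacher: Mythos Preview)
Your outline correctly identifies the standard semi-concavity mechanism (normal perturbations of a regular minimizer give a smooth upper barrier with uniformly bounded second derivative), and in the unsaturated case $\alpha(E_{\hat\V},E_0)<\delta$ the paper proceeds exactly as you describe. The gap is in your Step~1: you assert that for $\V$ near $\V_m$ the minimizer $E_{\hat\V}$ of $\Iloc(\V)$ must satisfy $\alpha(E_0,E_{\hat\V})<\delta$ strictly, and you propose to prove this by ``a density estimate / uniform regularity argument.'' No such argument is given, and in fact none is available in this generality. If $E_0$ is only a (non-strict) volume-constrained local perimeter minimizer, there can exist another set $E^*$ with $\LL^n(E^*)=\V_m$, $\LL^n(E_0\triangle E^*)=2\delta$ (hence $\alpha(E_0,E^*)=\delta$), and $P(E^*;\Omega)=P(E_0;\Omega)$; perturbations of $E^*$ then give admissible competitors for $\Iloc(\V)$ that sit on the constraint boundary, and there is no mechanism forcing the minimizer into the interior.

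The paper does \emph{not} attempt to rule out the saturated case; it treats it head-on. When $\alpha(E_{\hat\V},E_0)=\delta$, say $\alpha=\LL^n(E_0\setminus E_{\hat\V})$, one observes that perturbations of $E_{\hat\V}$ supported in $\Omega\setminus \overline{E_0}$ leave $\alpha$ unchanged, so $E_{\hat\V}\setminus E_0$ is a genuine volume-constrained local perimeter minimizer inside $U:=\Omega\setminus\overline{E_0}$ and $\partial E_{\hat\V}\cap U$ is smooth. One then builds the touching function $\phi_{\hat\V}$ from a normal perturbation $\Phi_\tau\nu$ that is cut off near $\partial U$ (so that the flow stays in $U$ and remains admissible). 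The price is that the second-variation bound now involves $\int|\nabla_{\partial E_{\hat\V}}\Phi_\tau|^2 / (\int \Phi_\tau)^2$, and controlling this ratio uniformly in $\hat\V$ requires a lower bound on $P(E_{\hat\V};\{\Phi_\tau=1\})$. This is exactly the content of the technical Lemma~\ref{Lem:Isomorphism}, which gives an isoperimetric lower bound $\mathcal{I}_{U_\tau}(\V)\ge C\V^{(n-1)/n}$ uniform for small $\tau$; the lemma is proved by flowing $U$ inward along a carefully constructed vector field and comparing with the isoperimetric inequality on $U$. None of this machinery appears in your proposal, and it is precisely what is needed to close the argument when the $\alpha$-constraint is active.
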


Before proving this proposition, we state and prove a technical lemma. 
In what follows we say that an open set $U\subset\mathbb{R}^n$ has \emph{piecewise $C^2$ boundary} if $\partial U$ can be written as the union of finitely many connected $(n-1)$-dimensional manifolds  with boundary of class $C^2$ up to the boundary, with pairwise disjoint relatively interiors. 

\begin{lemma} \label{Lem:Isomorphism}
   Let $U = \Omega \backslash \overline{E_0}$ for some volume constrained perimeter minimizer $E_0$. Given $\tau>0$, let 
\begin{equation}\label{Ut}
U_\tau:=\{x\in U : d(x,\R^n \backslash U) > \tau\}.
\end{equation}
Then there exist a $A>0$ and $C_1,C_2 >0$ so that for all $\tau$ sufficiently small and all $\V \in (C_1 \tau ,A)$,
\[
\mathcal{I}_{U_\tau}(\V) \geq C_2 \V^{(n-1)/n}.
\]
\end{lemma}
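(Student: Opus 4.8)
The statement is essentially a relative isoperimetric inequality for the open set $U = \Omega \setminus \overline{E_0}$, valid for small enough volumes, together with a quantitative "erosion" estimate saying that removing a $\tau$-neighborhood of $\partial U$ costs at most $O(\tau)$ in volume. Let me sketch the two pieces.

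First, for $U$ itself, I claim a relative isoperimetric inequality of the form $P(E;U) \geq c\, \mathcal{L}^n(E)^{(n-1)/n}$ holds for all Borel $E \subset U$ with $\mathcal{L}^n(E) \leq A$, where $A$ is small. This is where the regularity of $E_0$ enters. Because $E_0$ is a volume-constrained perimeter minimizer in the $C^{2,\alpha}$ domain $\Omega$ with $2\leq n\leq 7$, its reduced boundary $\partial^* E_0$ is a $C^2$ hypersurface (with empty singular set in this dimension range) meeting $\partial\Omega$ orthogonally, so $\partial U$ is piecewise $C^2$ in the sense defined just before the lemma; in particular $U$ is (locally, near each boundary point) bi-Lipschitz to a half-ball, hence an extension/$W^{1,1}$-extension domain, and for small volumes the classical isoperimetric inequality on $\mathbb{R}^n$ applied after extension (or a standard covering argument reducing to balls and half-balls) yields the constant $c$. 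The restriction to small $A$ is exactly so that a set of volume $\leq A$ can be localized to a single chart where the half-ball comparison is available. I would state this as: there exist $A>0$, $c>0$ with $P(E;U)\geq c\,\mathcal{L}^n(E)^{(n-1)/n}$ for all Borel $E\subset U$ with $\mathcal{L}^n(E)\leq A$.

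Second, I transfer this to $U_\tau$. Given a competitor $E\subset U_\tau$ with $\mathcal{L}^n(E)=\V$ and $\V\in(C_1\tau,A)$, note $P(E;U_\tau)=P(E;U)$ is \emph{not} quite what we want since the lemma's left side is $P(E;\Omega)$; but since $E\subset U_\tau$ is compactly contained in $U$, in fact $\partial E$ stays away from $\partial\Omega$, so $P(E;\Omega)=P(E;\R^n)\geq P(E;U)$, and we may apply the inequality from the first step directly: $P(E;\Omega)\geq c\,\V^{(n-1)/n}$. Wait — we must also ensure $\V\leq A$, which is given. The role of the lower bound $\V>C_1\tau$ and of the set $U_\tau$ is more subtle: the point is that $U_\tau$ is nonempty and has volume bounded below (so the infimum defining $\mathcal{I}_{U_\tau}(\V)$ is over a nonempty family) precisely when $\V$ is not too small relative to $\tau$; since $\partial U$ is piecewise $C^2$, $\mathcal{L}^n(U\setminus U_\tau)\leq C\tau$ for small $\tau$ (the boundary has finite $(n-1)$-measure and bounded curvature, so the tube around it has volume $O(\tau)$), whence $\mathcal{L}^n(U_\tau)\geq \mathcal{L}^n(U)-C\tau>0$, and more importantly any target volume $\V\leq \mathcal{L}^n(U_\tau)$ is admissible — the constraint $\V>C_1\tau$ is not actually needed for the inequality but may be inherited from the regime where the lemma is invoked, so I would simply carry it along. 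Taking $C_2:=c$ finishes the proof.

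The main obstacle is the first step: establishing the relative isoperimetric inequality on $U$ with a \emph{uniform} constant for small volumes, and in particular handling the corner set $\partial^*E_0 \cap \partial\Omega$ where $\partial U$ fails to be a $C^2$ manifold. My plan is to use the orthogonal-intersection property: near a point of $\partial^* E_0\cap\partial\Omega$, after a $C^{1}$ change of coordinates flattening $\partial\Omega$ the set $U$ looks like a quarter-space (two $C^2$ sheets meeting orthogonally), and the isoperimetric inequality in a quarter-space (or more simply in a half-space, reflecting through $\partial\Omega$ to double the configuration) is classical; a finite cover of the compact set $\overline{U}$ by such charts plus the interior case gives the uniform constant, at the price of shrinking $A$. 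Alternatively, and perhaps more cleanly, one can invoke the general fact (e.g. from the theory of extension domains, or a direct argument via the fact that $U$ satisfies an interior cone condition uniformly) that a bounded open set with Lipschitz boundary admits a relative isoperimetric inequality for small volumes; the piecewise-$C^2$ structure with orthogonal corners is certainly Lipschitz. I would take whichever of these is least technical given the tools already imported in \cite{LeoniMurray} and \cite{MurrayRinaldi}.
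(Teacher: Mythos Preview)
Your argument has a genuine gap at the transfer step. You assert that $P(E;U_\tau)=P(E;U)$ for $E\subset U_\tau$, but this is false: a competitor $E\subset U_\tau$ may have reduced boundary along $\partial U_\tau\cap U$, and that portion is counted by $P(E;U)$ but \emph{not} by $P(E;U_\tau)$. (Take, for instance, $E=U_\tau\cap B$ for a ball $B$ crossing $\partial U_\tau$; in the extreme case $E=U_\tau$ one has $P(U_\tau;U_\tau)=0$.) Since in general $P(E;U_\tau)\le P(E;U)$ with strict inequality possible, the relative isoperimetric inequality you establish in $U$ does not pass down to $U_\tau$ by monotonicity. You also misidentify the quantity on the left as $P(E;\Omega)$: in the paper's convention $\mathcal{I}_{U_\tau}(\V)=\inf\{P(E;U_\tau):E\subset U_\tau,\ \mathcal{L}^n(E)=\V\}$, so the relevant perimeter is taken in $U_\tau$, not in $\Omega$.

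The paper addresses exactly this difficulty by constructing a $C^1$ inward-pointing vector field $T$ on $U$ (carefully handling the corner set $\partial E_0\cap\partial\Omega$ where the $C^2$ pieces of $\partial U$ meet transversally), flowing along $T$ to obtain bi-Lipschitz diffeomorphisms $\Psi_t:U\to U^t:=\Psi_t(U)$, and proving the sandwich $U_\tau\subset U^{c_3\tau}\subset U_{c_4\tau}$. This allows one to bound $P(E;U_\tau)$ from below not by $P(E;U)$ for the \emph{same} $E$, but by $P\bigl(\Psi_t^{-1}(E\cap U^t);U\bigr)$ for an appropriate $t$, at the cost of a volume error of order $\tau$. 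That volume error is precisely why the hypothesis $\V>C_1\tau$ is required---contrary to your remark that it ``is not actually needed for the inequality''---since one must guarantee that $\mathcal{L}^n(E)-c\tau$ remains comparable to $\mathcal{L}^n(E)$. Your first step (the relative isoperimetric inequality in the Lipschitz domain $U$ for small volumes) is correct and is also invoked in the paper, but by itself it does not yield the lemma; the substantive content is the uniform-in-$\tau$ comparison between $U_\tau$ and $U$, which your sketch omits.
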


\begin{proof}
   We remark that the the boundary of $U$ will have piecewise $C^2$ with components that meet transversally. Furthermore the components of the boundary of $U$ can be locally extended without intersecting $U$. 
  
{\bf Step 1:} We begin by constructing a $C^1$ vector field $T$ which points into the domain $U$. 

Let $M_i$, $i=1,\ldots,m$,  be the finitely many connected $(n-1)$-dimensional manifolds of class $C^2$ with boundary whose union gives $\partial U$. Extend each $M_i$ in such a way that $M_i$ is a subset of  the boundary of an open set $V_i$ of class $C^2$ with  $V_i \cap U=\emptyset$. Set $F_i=\partial V_i$. 
%
Next we extend the normal vector field $\nu_{F_i}$ to a vector field $T_i\in  C^{1}(\mathbb{R}^n;\mathbb{R}^n)$.
If $M_i$ and $M_j$ intersect transversally, then for $x\in \partial M_i\cap\partial M_j$ we have $T_i(x)\cdot T_j(x)=\nu_{F_i}(x)\cdot \nu_{F_j}(x)=0$ and thus by continuity we can find $\tilde \rho>0$ such that $| T_i(x)\cdot T_j(x)|\le \frac1{2m}$ for all $x$ in a $\tilde \rho$-neighborhood (denoted by $U_{i,j}$) of $\partial M_i\cap\partial M_j$. By taking $\tilde \rho$ even smaller, if necessary, we can assume that the same $\tilde \rho$ works for all $i$ and $j$ such that $M_i$ and $M_j$ intersect transversally. Next, set
\[
d_0: = \min_{i \neq j} d(M_i \backslash U_{i,j}, M_j \backslash U_{i,j})>0
\]
and let $\rho := 1/2 \min(\tilde \rho, d_0)>0$.

We then choose smooth cutoff functions $\varphi_i$ which are valued $1$ on $M_i$ and $0$ at distance $\rho/2$ from the same sets and consider the vector field $T: =\sum_{i=1}^{m} \varphi_i T_i$. Note that $T\in  C^{1}(\mathbb{R}^n;\mathbb{R}^n)$, with $\Vert T\Vert_\infty \leq C$ and $\Vert \nabla T\Vert_\infty\le C$ for some constant $C>0$.

We claim that 
\begin{equation}\label{normal}
  T(x) \cdot \nabla  d_{V_i}(x) \ge 1/4
\end{equation}
for all points $x\in U$ in a $\rho_0$-neighborhood of $F_i$, where $d_{V_i}$ is  the signed distance to the set $V_i$ enclosed by $F_i$. 
By Theorem 3 in \cite{Krantz-Parks} we have that $d_{V_i}$ is a $C^2$ function in a neighborhood of $F_i$.

Suppose $x\in M_i$. Then $T_i(x)=\nu_{F_i}(x)=\nabla  d_{V_i}(x)$, and so 
\begin{equation}\label{normal2}
  T(x) \cdot \nabla  d_{V_i}(x) =1+ \sum_{j\ne i}^{m} \varphi_j(x) T_j(x)\cdot T_i(x).
\end{equation}
If $x$ is in $\rho$-neighborhood of $\partial M_i\cap\partial M_j$, then $T_j (x)\cdot T_i(x)\ge -\frac{1}{2m}$ otherwise $\varphi_j(x)=0$. Thus, in both cases $ T(x) \cdot \nabla d_{V_i}(x) \ge\frac{1}{2}$.
By continuity of $T$ and $\nabla  d_{V_i} $, the inequality \eqref{normal2} implies that \eqref{normal} holds in a neighborhood of $M_i$.

{\bf Step 2:} We consider the flow along $T$, meaning that for $x\in\mathbb{R}^n$ we take the initial value problem
\[
\left\{
\begin{array}
[c]{l}%
\frac{d\Psi}{dt}(t)=T(\Psi(t)),\\
\Psi(0)=x.
\end{array}
\right.
\]
Since $T$ is Lipschitz continuous, there exists a  unique global solution $\Psi$
defined for all $t\in\mathbb{R}$. To highlight the dependence on $x$ we write
$\Psi(\cdot,x)$ and we define $\Psi_{t}(x):=\Psi(t,x)$. Let $U^t: = \Psi_t(U)$. By construction $\Psi_t$ satisfies
\[
(1-Ct) |x-y| \leq |\Psi_t(x) - \Psi_t(y)| \leq (1+Ct) |x-y|.
\]
This implies that for any set $E \subset U^t$ of finite perimeter, 
\begin{align}\label{bi-lipschitz}
(1-Ct)^n \LL^n(E) &\leq \LL^n(\Psi_t^{-1}(E)) \leq (1+Ct)^n \LL^n(E),\\(1-Ct)^{n-1} P(E;U^t) &\leq P(\Psi_t^{-1}(E);U) \leq (1+Ct)^{n-1} P(E;U^t).
\end{align}

We claim that $U_{\tau}\subset U^{c_{3}\tau}$, where $c_{3}:=1/\Vert T\Vert_{\infty}$. To see this, let $y\in U_{\tau}$. For every
$t\in\mathbb{R}$ we have $|\Psi_{t}(y)-y|\leq|t|\Vert T\Vert_{\infty}$, and
so
\[
d(\Psi_{t}(y),\mathbb{R}^{n}\setminus U)\geq d(y,\mathbb{R}^{n}\setminus
U)-|\Psi_{t}(y)-y|>\tau-|t|\Vert T\Vert_{\infty}\geq0
\]
provided $|t|\leq\tau/\Vert T\Vert_{\infty}$. In turn, $\Psi_{t}(y)\in U$ for
$|t|\leq\tau/\Vert T\Vert_{\infty}$. Define  $x_{\tau}:=\Psi_{-c_{3}\tau}(y)$. and consider the function $\Psi
(\cdot,x_{\tau})$. Since the system of differential equations is autonomous
and solutions are unique, we have that $\Psi_{c_{3}\tau}(x_{\tau})=\Psi
_{c_{3}\tau}(\Psi_{-c_{3}\tau}(y))=y$, which shows that $y\in U^{c_{3}\tau
}=\Psi_{c_{3}\tau}(U)$.


Next, we claim that $U^{c_3\tau} \subset U_{c_4 \tau}$ for all $\tau$ sufficiently small, and for some constant $c_4$. Let $x \in U$ be in a $\rho_0/2$ neighborhood of $M_i$, where $\rho_0$ was given in  Step 1. Since $d_{V_i}$ is $C^2$, by the chain rule we may write
\begin{align*}
  d_{V_i}(\Psi_t(x)) &= d_{V_i}(x) + \int_0^t \nabla d_{V_i}(\Psi_s(x))\cdot T(\Psi_s(x)) \,ds\\
  &\geq d_{V_i}(x) + \frac{t}{4} \geq d(x,\R^n \backslash U) + \frac{t}{4},
\end{align*}
where we have used \eqref{normal}, and where we have assumed that $t < \frac{\rho_0}{2\|T\|_\infty}$. As this is true for all $i$, and as $d(x,\R^n\backslash U) = \min_i d_{V_i}(x)$ for $x\in U$, we find that
\[
d(\Psi_t(x),\R^n \backslash U) \geq d(x, \R^n \backslash U) + t/4
\]
for $x$ near $\partial U$ and for $t$ sufficiently small. This proves the claim for $x$ close to the boundary, and for $x$ far away from the boundary there is nothing to prove.

In summary, we know that $U_\tau \subset U^{c_3\tau} \subset U_{c_4 \tau}$, as along as $\tau$ is sufficiently small, for $c_3,c_4$ independent of $\tau$.  These two inclusions,  along with \eqref{bi-lipschitz}, imply that for any set $E$ of finite perimeter we have that
\[
  P(E;U_\tau) \geq P(E;U^{c_3/c_4\tau})
\]
and that $\LL^n(U_\tau \backslash U^{c_3/c_4 \tau})\le  c_5 \tau$, with $c_5>0$ independent of $\tau$. 

Finally, let $E\subset U_\tau$  be a set of finite perimeter satisfying $\LL^n(E) > 2 c_5 \tau$. By \eqref{bi-lipschitz}, the previous inequalities, and the isoperimetric inequality  (which applies as $U$ must be Lipschitz) we have that
\begin{align*}
  P(E;U_\tau) &\geq P(E;U^{c_3/c_4\tau}) \\
&\geq CP(\Psi_{c_3 /c_4\tau}^{-1}(E \cap U^{c_3/c_4\tau});U) \\
&\geq C\left( \LL^n(\Psi_{c_3 /c_4\tau}^{-1}(E\cap U^{c_3/c_4\tau}))\right)^{(n-1)/n} \\
&\geq C\left( \LL^n(E\cap U^{c_3/c_4\tau})\right)^{(n-1)/n} \\
&\geq C \left( \LL^n(E) - c_5 \tau\right)^{(n-1)/n} \geq C \LL^n(E)^{(n-1)/n}.
\end{align*}
This completes the proof.

\end{proof}

Now we prove Proposition \ref{Prop:Semi-Concave}.

\begin{proof}[Proof of Proposition \ref{Prop:Semi-Concave}]
Let $E_{\hat \V}$ be a minimizer of
\begin{equation}\label{def:iloc-min-prob}
\min \{ P(E;\Omega)  :\, E \subset \Omega \text{ Borel, } \LL^n(E) = \hat \V,\, \alpha(E_0,E) \leq \delta  \},
\end{equation}
 with $\hat \V \in J_\delta = (\V_m - \delta/2, \V_m + \delta/2)$. Since $\partial E_0$ is regular and intersects the boundary of $\Omega$ orthogonally, we know that 
\begin{equation} \label{eqn:iso-lower-bound}
\mathcal{I}_{\Omega \cap E_0}(\V) \geq C \V^{\frac{n-1}{n}}, \qquad \mathcal{I}_{\Omega \backslash E_0}(\V) \geq C \V^{\frac{n-1}{n}}
\end{equation}
for all $\V$ sufficiently small (see, e.g., Corollary 3 in Section 5.2.1 of \cite{MazyaBook}). We pick $\delta$ small enough that \eqref{eqn:iso-lower-bound} holds for $\V \in (0, 2 \delta)$.

Next, we claim that we can construct a smooth function $\phi_{\hat \V}$ defined on a neighborhood of $\hat \V$ so that
\begin{equation} \label{eqn:touching-function}
\phi_{\hat \V}(\hat \V) = \Iloc(\hat \V),\quad \phi_{\hat \V}(\V) \geq \Iloc(\V), \quad \phi_{\hat \V}'' \leq C,
\end{equation}
where $C$ does not depend on $\hat \V$, but may depend on $\delta$.

To prove this claim, we consider two different cases. First, suppose that $\alpha(E_{\hat \V}, E_0) < \delta$. Then by \eqref{labelstar3}, $E_{\hat \V}$ is actually a volume-constrained local  perimeter minimizer, and hence we can prove \eqref{eqn:touching-function} by using a normal perturbation and the fact that $\partial\Omega$ is smooth, see Lemma 4.3 in \cite{MurrayRinaldi} for details.

Now suppose that $\alpha(E_{\hat \V}, E_0) = \delta$. In view of \eqref{labelsharp3} we may assume, without loss of generality, that $\alpha(E_{\hat \V}, E_0) = \LL^n(E_0 \backslash E_{\hat \V})$ (the opposite case is analogous). Hence, we may locally perturb $E_{\hat \V}$ inside the set $\Omega \backslash E_0$ without increasing the value of $\alpha(E_{\hat \V}, E_0)$. In particular, by \eqref{labelstar3},  $E_{\hat \V}\backslash E_0$ is a local minimizer of the problem
\[
\min \{P(E;\Omega \backslash E_0) : E \subset \Omega\backslash E_0 \text{ Borel, } \LL^n(E)= \LL^n(E_{\hat \V} \backslash E_0)\}.
\]
Hence, by \cite{GMT83}, $\partial E_{\hat \V} \cap (\Omega \backslash E_0)$ is analytic.

We note that
\begin{align*}
 \delta - \LL^n(E_{\hat \V} \backslash E_0) &= \LL^n(E_0 \backslash E_{\hat \V}) - \LL^n(E_{\hat \V} \backslash E_0) \\
 &=  \LL^n(E_0)  - \LL^n(E_{\hat \V})=\V_m-\hat \V \in (-\delta/2, \delta/2).
\end{align*}
Hence, we know that $\LL^n(E_{\hat \V} \backslash E_0) \in [\delta, \frac{3 \delta}{2}]$. Since $E_0$ is a local volume constrained perimeter minimizer by \cite{GMT83} and \cite{GruterBoundaryRegularity}, its boundary is smooth inside $\Omega$ and intersects $\partial \Omega$ transversally. In particular, it may only have finitely many connected components, and hence by selecting $\delta$ sufficiently small we may assume that $\partial E_{\hat \V} \cap (\Omega \backslash E_0)$ is non-empty.

Next, let $U: = \Omega \backslash \overline{E_0}$. Let $\tilde d \in C^\infty(\R^n \backslash \partial U)$ be a regularized distance function from $\R^n \backslash U$, satisfying the properties
\begin{equation}
C_1 \leq \frac{\tilde d(x)}{d(x,\R^n \backslash U)} \leq C_2 \quad \text{ for $x \in U$}, \quad\|\nabla \tilde d\|_\infty \leq C, \label{Eqn:Signed-Distance-Properties}
\end{equation}
where $d(x,\R^n \backslash U)$ is the signed distance function.
Such a regularized distance function, as well as the aforementioned properties, is constructed in \cite{SteinBookSingularIntegral}.

Let $\phi_{\tau}: \R \to \R^+$ be a smooth function satisfying $\phi_{\tau}(s) = 0$ for all $s <  \tau/2 $, $\phi_{\tau}(s) = 1$ for all $s > \tau $, with $\phi_{\tau}$ strictly increasing for $\tau/2 < s < \tau$, and $\|\phi_{\tau} '\|_\infty \leq \frac{C}{\tau}$ with  $\tau$ to be chosen. We define $\Phi_{\tau}(x) := \phi_{\tau}(\tilde d(x))$.

Let $T \in C_c^\infty(U;\R^n)$ be an extension of the vector field $\Phi_{\tau} \nu_{\partial E_{\hat \V}}$. Define a one parameter family of diffeomorphisms given by $f_t(x) = x + tT(x)$, where $t$ is sufficiently small. Note that $f_t(x) = x$ for all $x\in \overline{E_0}$ and all $t$ sufficiently small. Hence by \eqref{labelstar3} the sets $f_t(E_{\hat \V})$ satisfy $P(f_{t}(E_{\hat \V});\Omega)\ge  \Iloc(\LL^n(f_t(E_{\hat \V})))$ since $\alpha(E_{\hat \V}, E_0) = \LL^n(E_0 \backslash E_{\hat \V})$. Using the formulas in Chapter 17 of \cite{MaggiBook}, there exists a function $\phi_{\hat \V} = P(f_{t(\V)}(E_{\hat \V});\Omega)$ such that for all $\V$ in a neighborhood of $\hat \V$: 
\begin{align}
\phi_{\hat \V}(\hat \V) &= \Iloc(\hat \V),\qquad \phi_{\hat \V}(\V) \geq \Iloc(\V), \\
\phi_{\hat \V}''( \hat \V) &= \frac{\int_{\partial E_{\hat \V}} |\nabla_{\partial E_{\hat \V}} \Phi_{\tau}|^2 - \Phi_{\tau}^2 \|A_{E_{\hat \V}}\|^2 \,d \HH^{n-1}}{\left(\int_{\partial E_{\hat \V}} \Phi_{\tau} \,d \HH^{n-1}\right)^2},
\end{align}
 where $\|A_{E_{\hat \V}}\|$ is the Frobenius norm of the second fundamental form of the boundary of $E_{\hat \V}$, and where the mapping $t(\V) \to \V$ is a smooth, increasing map with $t(0) = 0$. The second derivative formula can be proved as in \cite{MaggiBook}, \cite{SternbergZumbrunIsoPer}. 

In order to prove \eqref{eqn:touching-function} we thus only need to prove that
\begin{equation}\label{Eqn:Second-Derivative-Bound}
\frac{\int_{\partial E_{\hat \V}} |\nabla_{\partial E_{\hat \V}} \Phi_{\tau}|^2 \,d \HH^{n-1}}{\left(\int_{\partial E_{\hat \V}} \Phi_{\tau} \,d \HH^{n-1}\right)^2}\le  C.
\end{equation}
To this end, using \eqref{Eqn:Signed-Distance-Properties} and the fact that $\|\phi_{\tau} '\|_\infty \leq \frac{C}{\tau}$ we have that
\begin{equation}\label{Eqn:numerator-bound}
  \int_{\partial E_{\hat \V}} |\nabla_{\partial E_{\hat \V}} \Phi_{\tau}|^2 \,d \HH^{n-1} \leq \frac{C}{\tau^2} P(E_{\hat \V};\Omega) \le  \frac{C}{\tau^2}.
\end{equation}
On the other hand, denoting the set $\tilde U := \{\Phi_{\tau} \geq 1\} = \{\tilde d \geq \tau\}$, we have that
\begin{equation}\label{Eqn:denominator-bound}
\int_{\partial E_{\hat \V}} \Phi_{\tau} \,d \HH^{n-1} \geq \int_{\partial E_{\hat \V} \cap \tilde U}  \,d \HH^{n-1} = P(\partial E_{\hat \V} ; \tilde U).
\end{equation}
By \eqref{Eqn:Signed-Distance-Properties} and the fact that $U$ has Lipschitz boundary, we have that
\[
\LL^n(U \backslash \tilde U) \leq \LL^n(\{x : 0 \leq d(x,\R^n \backslash U) \leq C_2 \tau\}) \leq C \tau.
\]
Using the notation \eqref{Ut} we also have, by \eqref{Eqn:Signed-Distance-Properties}, that $U_{\tau/C_2} \subset \tilde U$, and that $\LL^n(\tilde U \backslash U_{\tau/C_2}) \leq C_4\tau$. Hence using \eqref{eqn:iso-func} and Lemma \ref{Lem:Isomorphism} we find that
\begin{displaymath}
  \mathcal{I}_{\tilde U}(\V) \geq \inf_{\eta \le C_4 \tau}\mathcal{I}_{U_{\tau/C_2}}(\V-\eta) \geq C(\V-C_4\tau)^{(n-1)/n}
\end{displaymath}
as long as $\V - C_4\tau \in (C\tau,A)$.

Again recalling that $\LL^n(E_{\hat \V} \backslash E_0) \in [\delta, \frac{3 \delta}{2}]$ we find that, for $\delta$ sufficiently small and $\tau = c\delta$ with sufficiently small $c>0$,
\[
P(E_{\hat \V} ; \tilde U) \geq C \delta^{(n-1)/n}.
\]
This inequality, together with \eqref{Eqn:numerator-bound} and \eqref{Eqn:denominator-bound}, proves \eqref{Eqn:Second-Derivative-Bound}.

%
%
%
%
%

By then using an argument as in the proof of Lemma 2.7 in \cite{SternbergZumbrunIsoPer} (see also \cite{MurrayRinaldi}) we find that $\Iloc$ is semi-concave on $J_\delta$, which is the desired conclusion.
\end{proof}

As $\Iloc$ is semi-concave, it has a left derivative $(\Iloc)_-'$ and a right derivative $(\Iloc)_+'$ at every point $\V$  in $J_\delta$, with $(\Iloc)_-' (\V)\geq (\Iloc)_+'(\V)$. Furthermore, by considering a normal perturbation of $E_0$, we have that $(n-1)\kappa_{E_0} \in [(\Iloc)_-'(\V_0),(\Iloc)_+'(\V_0)]$. The following result gives a simple, yet important observation.

\begin{proposition}\label{lem:delta-to-zero}
Assume that $\Omega$ satisfies \eqref{domainAssumptions}  and let $E_0\subset \Omega$ be a volume-constrained local perimeter minimizer in $\Omega$, with $  \LL^n(E_0)=\V_m$. Then as $\delta \to 0$, $(\Iloc)_-'(\V_m) \to (n-1)\kappa_{E_0}$ and $(\Iloc)_+'(\V_m) \to (n-1) \kappa_{E_0}$, where $\kappa_{E_0}$ is the mean curvature of $E_0$.
\end{proposition}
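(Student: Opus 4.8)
The plan is to exploit the semi-concavity established in Proposition \ref{Prop:Semi-Concave} to squeeze the one-sided derivatives of $\Iloc$ at $\V_m$ as $\delta \to 0^+$. Fix a sequence $\delta_k \to 0^+$ along which both $(\Iloc[\delta_k])_-'(\V_m)$ and $(\Iloc[\delta_k])_+'(\V_m)$ converge (we may pass to a subsequence since, by the observation preceding the proposition and the semi-concavity bound $\phi_{\hat\V}'' \le C$, both one-sided slopes stay in a fixed bounded interval, being sandwiched around $(n-1)\kappa_{E_0}$ on one side and bounded above via the uniform semi-concavity constant on the other — I would first record this boundedness carefully). Denote the respective limits by $L_- \ge L_+$; since $(n-1)\kappa_{E_0} \in [(\Iloc)_+'(\V_m), (\Iloc)_-'(\V_m)]$ for every $\delta$, we automatically get $L_- \ge (n-1)\kappa_{E_0} \ge L_+$. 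So it suffices to show $L_- \le (n-1)\kappa_{E_0}$ and $L_+ \ge (n-1)\kappa_{E_0}$, i.e. that both limits equal $(n-1)\kappa_{E_0}$.

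\textbf{Upper bound on the left derivative.} For the direction $L_- \le (n-1)\kappa_{E_0}$, I would use a test competitor. The key point is that $\Iloc[\delta] \ge \I$ always, while $\Iloc[\delta](\V_m) = P(E_0;\Omega) = \I(\V_m)$ (the local constraint is not active at $E_0$ once $\delta>0$, since $\alpha(E_0,E_0) = 0 < \delta$). Moreover, for $\V$ slightly below $\V_m$, taking a normal perturbation of $E_0$ supported away from $\partial\Omega$ produces a competitor $E_\V$ with $\LL^n(E_\V) = \V$, with $\alpha(E_0, E_\V) = O(|\V - \V_m|) < \delta$ for $\V$ close enough to $\V_m$ (how close depends on $\delta$), and with $P(E_\V;\Omega) = P(E_0;\Omega) + (n-1)\kappa_{E_0}(\V - \V_m) + O(|\V-\V_m|^2)$ by the first and second variation formulas in Chapter 17 of \cite{MaggiBook}. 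Hence for each fixed $\delta$,
\[
\Iloc(\V) \le P(E_0;\Omega) + (n-1)\kappa_{E_0}(\V - \V_m) + C(\V - \V_m)^2
\]
for $\V$ in a ($\delta$-dependent) neighborhood of $\V_m$, which combined with $\Iloc(\V_m) = P(E_0;\Omega)$ gives $(\Iloc)_-'(\V_m) \le (n-1)\kappa_{E_0}$ and $(\Iloc)_+'(\V_m) \ge (n-1)\kappa_{E_0} - $ nothing useful directly; more precisely the one-sided slopes are pinched by: $(\Iloc)_+'(\V_m) \le (n-1)\kappa_{E_0}$ as well (from the $\V > \V_m$ side of the same expansion) and likewise $(\Iloc)_-'(\V_m) \ge (n-1)\kappa_{E_0}$ from the already-recorded containment. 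Combining, $(\Iloc)_-'(\V_m) = (\Iloc)_+'(\V_m) = (n-1)\kappa_{E_0}$ for every sufficiently small $\delta$ — in fact this argument, if it goes through cleanly, proves the statement \emph{without even needing $\delta \to 0$}. I would double-check whether the normal-perturbation competitor really keeps $\alpha \le \delta$ and stays admissible; the orthogonal intersection with $\partial\Omega$ and the regularity of $\partial E_0$ from \cite{GruterBoundaryRegularity,GMT83} are what make the perturbation legitimate.

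\textbf{The main obstacle} I anticipate is a subtlety in the previous paragraph: the normal perturbation of $E_0$ is only guaranteed admissible for $\V$ within a neighborhood of $\V_m$ whose size may shrink with $\delta$, so one cannot get a $\delta$-uniform two-sided expansion that way, and in particular one cannot immediately conclude $\phi_{\hat\V}'' \le C$ type bounds at $\V_m$ uniformly — but that's fine, because for the one-sided derivatives \emph{at the single point} $\V_m$ only the germ of $\Iloc$ near $\V_m$ matters, and for each fixed $\delta > 0$ that germ is controlled. The genuinely delicate point is ensuring that $E_0$ itself is a \emph{strict} interior point of the local constraint and that $\Iloc$ doesn't have a spurious kink at $\V_m$ coming from the constraint $\alpha(E_0,\cdot) \le \delta$ becoming active — but since $\alpha(E_0,E_0)=0$, the constraint is slack at the base point, so competitors that move $\V$ only slightly keep $\alpha$ small, and this cannot happen. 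I would therefore structure the proof as: (1) note $\alpha(E_0,E_0) = 0$ so the constraint is inactive near $\V_m$; (2) upper-bound $\Iloc$ near $\V_m$ by the perturbed-$E_0$ competitor, getting a $C^{1,1}$-from-above bound with slope $(n-1)\kappa_{E_0}$; (3) lower-bound using $\Iloc \ge \I$ together with the containment $(n-1)\kappa_{E_0} \in [(\Iloc)_+', (\Iloc)_-']$ already noted; (4) conclude equality of the one-sided derivatives with $(n-1)\kappa_{E_0}$, a fortiori their limits as $\delta \to 0$.
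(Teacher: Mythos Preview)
Your approach contains a fundamental gap. The normal-perturbation competitor $E_\V$ gives only an \emph{upper} bound
\[
\Iloc(\V) \le P(E_0;\Omega) + (n-1)\kappa_{E_0}(\V-\V_m) + O((\V-\V_m)^2),
\]
together with the equality $\Iloc(\V_m) = P(E_0;\Omega)$. Dividing by $\V-\V_m$ and minding the sign, this yields
\[
(\Iloc)_+'(\V_m) \le (n-1)\kappa_{E_0} \le (\Iloc)_-'(\V_m),
\]
which is precisely the containment already recorded before the proposition --- not the reverse inequality you claim. In particular your assertion ``gives $(\Iloc)_-'(\V_m) \le (n-1)\kappa_{E_0}$'' has the wrong sign: an upper barrier touching at $\V_m$ bounds the right derivative from above and the left derivative from \emph{below}. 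The two pieces of information you combine in step (4) are thus the same inequality stated twice, and the conclusion of equality for each fixed $\delta$ is unwarranted. Indeed it is false in general: for $\delta$ not too small the minimizers of the $\alpha$-constrained problem at nearby volumes need not be perturbations of $E_0$, and $\Iloc$ can genuinely kink at $\V_m$. Your proposed lower bound $\Iloc \ge \I$ cannot close the gap either, since $\I$ is not assumed differentiable at $\V_m$ (removing that hypothesis is the whole point of the paper), and if $E_0$ is only a local minimizer one need not even have $\Iloc(\V_m) = \I(\V_m)$.

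The paper proceeds by compactness instead. For each fixed $\delta$ it takes a sequence $\V_k \uparrow \V_m$ of differentiability points of $\Iloc$, extracts from the corresponding minimizers $E_{\V_k}$ a limit set $E_0^\delta$, and shows --- after a case split on whether the $\alpha$-constraint is saturated, using interior regularity and elliptic estimates to pass curvatures to the limit --- that $E_0^\delta$ is itself a volume-constrained local perimeter minimizer with constant mean curvature $(n-1)^{-1}(\Iloc)_-'(\V_m)$. Since $\alpha(E_0^\delta,E_0) \le \delta$, sending $\delta \to 0$ forces $E_0^\delta \to E_0$ in $C^\infty$, whence the curvatures converge. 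The limit $\delta \to 0$ is therefore essential, not superfluous as your argument would have it.
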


\begin{proof}
We will prove the result for the left derivative. For any fixed $\delta$, pick a sequence of points $\V_k \uparrow \V_0$ at which $\Iloc$ is differentiable. This is possible as $\Iloc$ is semi-concave. Also, as $\Iloc$ is semi-concave we have that $(\Iloc)'(\V_k) \to (\Iloc)_-'(\V_0)$. Let $E_{\V_k}$ be a minimizer of
\[
\min \{ P(E;\Omega)  :\, E \subset \Omega \text{ Borel, } \LL^n(E) = \V_k,\, \alpha(E_0,E) \leq \delta  \}.
\]

We claim that there exists a volume-constrained perimeter minimizer $E_0^\delta$, satisfying $\alpha(E_0^\delta, E_0) \leq \delta$, $ \LL^n(E_0^\delta) = \V_0$, and with mean curvature $\kappa_0^\delta = (\Iloc)_-'(\V_0)$.

First, suppose that we can pick a subsequence of $\V_k$ (not relabeled), such that $$\min\{ \mathcal{L}^{n}(E_{\V_k} \backslash E_0), \mathcal{L}^{n}(E_0\backslash E_{\V_k})\} = \mathcal{L}^{n}(E_{\V_k} \backslash E_0).$$ 
Suppose furthermore that $\liminf_{k \to \infty} \mathcal{L}^n(E_0\backslash E_{\V_k})  \geq \delta$.

Under these assumptions, and as long as $\delta$ is small enough and $\V_k$ is close enough to $\V_0$, we have that $\partial E_{\V_k} \cap E_0$ is a non-empty set. Furthermore, taking  local variations with support in $E_0$ will not increase the value of $\alpha(E_{\V_k},E_0)$. Hence, the mean curvature of $ E_{\V_k}$ inside the set $E_0$, which we will denote $\kappa_{\delta,k}^*$, will satisfy ( see Chapter 17 in \cite{MaggiBook})
\[
(n-1)\kappa_{\delta,k}^* = (\Iloc)'(\V_k).
\]
We remark that since $(\Iloc)'(\V_k) \to (\Iloc)_-'(\V_0)$, we immediately have that $\kappa_{\delta,k}^*$ is bounded.

By BV compactness, $\chi_{E_{\V_k}} \to \chi_{E_0^\delta}$ in $L^1(\Omega)$, for some set $E_0^\delta$ with finite perimeter. By lower semi-continuity of the perimeter, we have that $P(E_0^\delta;\Omega) = \Iloc(r_0) \leq P(E_0;\Omega)$. As $E_0$ is a local volume-constrained perimeter minimizer, for $\delta$ small enough we have that $E_0^\delta$ is a local volume-constrained perimeter minimizer as well. In particular, $\partial E_0^\delta$ is a surface of constant mean curvature. Furthermore, by the assumption that $\liminf_{k \to \infty} \mathcal{L}^n(E_0\backslash E_{\V_k})  \geq \delta$ we know that $\partial E_0^\delta \cap E_0$ is a set with positive perimeter.

By using the uniform bound on the curvatures, along with elliptic regularity, we then have that $E_{\V_k} \to E_0^\delta$ in $C^\infty$ in compact subsets of $E_0$ (see the proof of Theorem 1.9 in \cite{MurrayRinaldi}). Hence the mean curvature $\kappa_0^\delta$ of $E_0^\delta$ satisfies $(n-1)\kappa_0^\delta = (\Iloc)_-'(\V_0)$.

The case where $\liminf_{k \to \infty} \mathcal{L}^n(E_0\backslash E_{\V_k})  < \delta$ is in fact simpler, because the $\alpha$-constraint will not be saturated and any local perturbation is permissible. On the other hand, if we cannot pick a subsequence of $\V_k$ satisfying $\min\{ \LL^n(E_{\V_k} \backslash E_0), \LL^n(E_0 \backslash E_{\V_k})\} = \LL^n(E_{\V_k} \backslash E_0)$, then we must be able to pick a subsequence satisfying $\min\{ \LL^n(E_{\V_k} \backslash E_0), \LL^n(E_0 \backslash E_{\V_k})\} = \LL^n(E_{0} \backslash E_{\V_k})$. We then conduct the same steps, but this time in $\Omega \backslash E_0$. This proves the claim.

Finally, we recall that $\alpha(E_0^\delta, E_0) \leq \delta$. Hence we have that $\chi_{E_0^\delta} \to \chi_{E_0}$ in $L^1(\Omega)$ as $\delta \to 0$. By again using the same argument, $E_0^\delta$ must in fact converge in $C^\infty$ to $E_0$, and hence $\kappa_0^\delta \to \kappa_{E_0}$, or in other words, $(\Iloc)_-' \to (n-1)\kappa_{E_0}$. This concludes the proof.
\end{proof}

\section{Rearrangements and Weighted Problem}

Let $I=(A,B)$ for some $A<B$ and consider a function $\eta: I \to [0,\infty)$ which satisfies the following:
\begin{align}
\label{eta1} \eta &\in C(I) \cap C^1((A,t_0]) \cap C^1([t_0,B)),\qquad \eta>0 \quad\text{in }I\\ 
\label{eta2}
d_1(t-A)^{\frac{n-1}{n}} &\leq \eta(t) \leq d_2(t-A)^{\frac{n-1}{n}} \text{ for } t \in (A, A+t^\star),\\
\label{eta3}
d_3(B-t)^{\frac{n-1}{n}} &\leq \eta(t) \leq d_4(B-t)^{\frac{n-1}{n}} \text{ for } t \in (B-t^\star, B), \\
\label{eta4}
|\eta'(t)| &\leq \frac{d_5\eta(t)}{\min\{B-t,t -A\}}\quad\text{for }t\in I\setminus\{t_0\}, \quad \eta'_-(t_0) \geq \eta'_+(t_0),\\
\label{eta5}
\int_A^{t_0} \eta \dt &= \V_m,\qquad \int_I \eta \dt = 1,
\end{align}
for some $A<t_0<B$ and for some constants $d_1$, \ldots, $d_5>0$ and $t^\star>0$.

Next, define the energy
\[
G_\e(v) := \begin{cases} \int_I (W(v) + \e^2|v'|^2)\eta \dt &\text{ if } v \in H_\eta^1(I) \text{ and }\int_I v \eta \dt = m, \\ \infty &\text{ otherwise}. \end{cases}
\]
Under the hypotheses \eqref{eta1}--\eqref{eta5}, 
following the proof of Theorem 4.4 in \cite{LeoniMurray}, it can be shown that $G_\e^{(1)}=\e^{-1}G_\e \gto  G_0^{(1)}$, where $G_0^{(1)}$ is given by
\[
G_0^{(1)}(v) := \begin{cases} \frac{2c_W}{b-a} |Dv|_\eta(I) \text{ if } v \in BV_\eta(I) \text{ and } \int_I v \eta \dt = m,\\ \infty \text{ otherwise,} \end{cases}
\]
with $c_W$ the constant given in \eqref{c0Definition}. In view of \eqref{partitionProblemMassConstraint} and \eqref{eta5}, it can also be shown as in Theorem 4.6 in \cite{LeoniMurray} that $v_0 =  a\chi_{[A,t_0)} + b\chi_{[t_0,B]}$ is an isolated $L^1$-local minimizer of $G_0^{(1)}$, and hence for some $\hat \delta$ sufficiently small we have that $v_0$ is the unique limit of minimizers $v_\e$ of the functionals
\[
J_\e(v) := \begin{cases} G_\e(v) &\text{ if } v \in H^1_\eta(I),\, \int_I v \eta \dt = m \text{ and } \|v-v_0\|_{L_\eta^1} \leq \hat \delta, \\ \infty &\text{ otherwise}.  \end{cases}
\]
Note that $v_\e$ satisfies the Euler-Lagrange equation
\begin{equation}\label{E-L}
2\e^2(v_\e' \eta)' - W'(v_\e) \eta = \e \lambda_\e \eta.
\end{equation}

Our goal is to prove the following theorem:

\begin{theorem}\label{thm:1D}
Assume that $W$ satisfies hypotheses \eqref{W_Smooth}--\eqref{WGurtin_Assumption} and that $\eta$ satisfies \eqref{eta1}--\eqref{eta5}. Let $v_\e$ be a minimizer of $G_\e$ with $v_\e \to v_0$ in $L_\eta^1$ as $\e\to 0^+$. Then, 
\begin{align}
\liminf_{\e \to 0^+}& \frac{G_\e^{(1)}(v_\e)-2c_W \eta(t_0)}{\e} \geq
2 \eta_-'(t_0) \int_{-\infty}^0 W^{1/2} (z(s-\tau_0)) z'(s-\tau_0) s\ds\\ &+ 2 \eta_+'(t_0)  \int_0^\infty W^{1/2} (z(s-\tau_0)) z'(s-\tau_0) s\ds+\begin{cases}
\frac{\lambda_0^2}{2W''(a)}\int_I \eta(t) \dt & \text{if }q=1,\\
0&\text{if }q<1,
\end{cases} 
\label{liminf}
\end{align}
where  $c_{sym}$ is the constant given in \eqref{c1Definition},
\begin{equation}
\lim_{j \to \infty}\lambda_{\e_j} =\lambda_0 \in \left[\frac{2c_W\eta'_+(t_0)}{(b-a) \eta(t_0)},\frac{2c_W\eta'_-(t_0)}{(b-a) \eta(t_0)}\right]
\label{lambda e}
\end{equation}
for some subsequence $\e_j\to 0^+$, and the number $\tau_0$ is given by
\begin{equation}\label{tau 0}
\eta(t_0)\int_\R z(s-\tau_0) - sgn_{a,b}(s) \ds = \frac{\lambda_0}{W''(a)} \int_I \eta (t)\dt,
\end{equation}
with $z$ the solution to \eqref{profileCauchyProblem}.
\end{theorem}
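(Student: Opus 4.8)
The plan is to perform a careful asymptotic analysis of the minimizer $v_\e$ near the transition point $t_0$, extracting the second-order term from the energy expansion. First I would establish the preliminary structural facts: since $v_\e \to v_0$ in $L^1_\eta$, standard arguments (as in \cite{LeoniMurray}) give that $v_\e$ develops a single transition layer, that $\lambda_\e$ is bounded, and that along a subsequence $\lambda_{\e_j} \to \lambda_0$. The bound \eqref{lambda e} on $\lambda_0$ should come from testing the Euler--Lagrange equation \eqref{E-L} against suitable competitors and using the one-sided derivatives $\eta'_\pm(t_0)$; the asymmetry of the interval weight $\eta$ at $t_0$ is exactly what produces the one-sided bounds. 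Next I would localize: rescale coordinates by setting $t = t_0 + \e s$ and define the blow-up $\tilde v_\e(s) := v_\e(t_0 + \e s)$. Because $\eta$ is $C^1$ on each side of $t_0$ with $\eta(t_0) > 0$, in the rescaled variable the equation \eqref{E-L} converges (after extracting a further subsequence) to the limiting profile equation whose solution is $z(\cdot - \tau_0)$ with the shift $\tau_0$ pinned by the mass-type constraint; the precise relation \eqref{tau 0} comes from matching the first-order correction in the constraint $\int_I v_\e \eta\,dt = m$ against $\int_I v_0 \eta\,dt = m$ and \eqref{deltaUDefinition}--\eqref{deltaUDefinitionCase2}.

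The core of the argument is the energy expansion. Writing $G_\e^{(1)}(v_\e) = \e^{-1}\int_I (W(v_\e) + \e^2 |v_\e'|^2)\eta\,dt$, I would split the integral into the transition region $|t - t_0| \le \e R$ (for $R \to \infty$ slowly) and its complement. On the complement $v_\e$ is exponentially close to $a$ or $b$ by a comparison/barrier argument, so that part contributes only the bulk term (which for $q=1$ carries the $\lambda_0^2$ contribution via the quadratic behavior of $W$ near its wells, and for $q<1$ is negligible at this order). In the transition region, after rescaling and using that $\eta(t_0 + \e s) = \eta(t_0) + \e s\,\eta'_\pm(t_0) + o(\e)$ depending on the sign of $s$, the leading term reproduces $2c_W\eta(t_0)$ and the next-order term is precisely
\[
2\eta'_-(t_0)\int_{-\infty}^0 W^{1/2}(z(s-\tau_0))z'(s-\tau_0)s\,ds + 2\eta'_+(t_0)\int_0^\infty W^{1/2}(z(s-\tau_0))z'(s-\tau_0)s\,ds,
\]
using the equipartition identity $W(z) = (z')^2$ from \eqref{profileCauchyProblem} to write $W(v_\e) + \e^2|v_\e'|^2 \approx 2\e^2|v_\e'|^2$ on the transition layer and the convergence $\tilde v_\e' \to z'(\cdot - \tau_0)$. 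The $\liminf$ (rather than a limit) is all that is needed, so I can afford to discard favorable error terms and use lower semicontinuity / Fatou in the rescaled variable.

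The main obstacle, I expect, is making the transition-region analysis rigorous \emph{uniformly} in the subsequence and controlling the interplay between the slowly-growing cutoff $R$, the exponential decay estimates, and the weight $\eta$ whose derivative jumps at $t_0$. In particular, one must ensure that the contribution of the layer to the mass constraint and to the energy are simultaneously captured to order $\e$, which requires quantitative (not just qualitative) convergence $\tilde v_\e \to z(\cdot - \tau_0)$ — this is where the Euler--Lagrange equation \eqref{E-L}, elliptic estimates, and the monotonicity/uniqueness of the profile $z$ enter. For $q < 1$ the potential is less regular at the wells, so the barrier argument giving exponential (or at least fast polynomial) decay must be adapted, but since the bulk term is not needed at this order the required estimates are weaker in that case. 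Once the lower bound is assembled on the extracted subsequence, a standard argument shows it holds for the full $\liminf$, completing the proof.
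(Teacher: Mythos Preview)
Your proposal is correct and follows essentially the same approach as the paper: rescale $s = (t-t_0)/\e$, show the blow-ups converge to the shifted profile $z(\cdot-\tau_0)$, split the energy into a transition window (the paper takes $l_\e = C|\log\e|$, matching your ``$R\to\infty$ slowly'') and its complement, Taylor-expand $\eta$ with one-sided derivatives at $t_0$, and identify the bulk contribution according to $q$. The paper obtains the $\liminf$ by the explicit Modica--Mortola square completion $(W^{1/2}(w_\e)-w_\e')^2\ge 0$ rather than an equipartition heuristic, and derives the interval \eqref{lambda e} by splitting the weak-$*$ limit of $W^{1/2}(v_\e)|v_\e'|\eta\,\mathcal{L}^1$ into left and right pieces of $t_0$ with a parameter $\theta\in[0,1]$ (rather than ``testing against competitors''), but these are implementation details within the same overall strategy.
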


\begin{proof} By taking a subsequence (not relabeled),
without loss of generality, we may assume that the $\liminf$ on the left-hand side of \eqref{liminf} is actually a limit. Also, for simplicity we take $t_0=0$.

\noindent{\bf Step 1.} We claim that \eqref{lambda e} holds. This proof follows as in Theorem 4.9 in \cite{LeoniMurray}. The only difference is that at the last part of the proof we can no longer use the fact that $\eta$ is of class $C^1$ and we need to show that
\[
\lim_{\e \to 0^+} \int_I W^{1/2} (v_\e) |v'_\e| \eta' \psi \dt = c_W \psi(0) \eta_1,
\]
for some $\eta_1 \in [\eta'_+(0), \eta'_-(0)]$. Following the proof cited above, we know that $W^{1/2}(v_\e)|v_\e'|\eta\mathcal{L}^1\lfloor [A,B] \stackrel{*}{\rightharpoonup} c_W\eta(0)\delta_{0}$. Hence by picking an appropriate subsequence, we have, for some $\theta \in [0,1]$,
\begin{align*}
&W^{1/2}(v_\e)|v_\e'|\eta\mathcal{L}^1\lfloor [A,0] \stackrel{*}{\rightharpoonup} \theta c_W\eta(0)\delta_{0}, \\
&W^{1/2}(v_\e)|v_\e'|\eta\mathcal{L}^1\lfloor [0,B] \stackrel{*}{\rightharpoonup} (1-\theta) c_W\eta(0)\delta_{0}.
\end{align*}
Hence, 
\[
\lim_{\e \to 0^+} \int_I W^{1/2} (v_\e) |v'_\e| \eta' \psi \dt = c_W \psi(0) (\theta \eta'_-(0) + (1-\theta) \eta'_+(0)),
\]
which is the desired conclusion.

\noindent{\bf Step 2.}  We claim that there exists a sequence of numbers $\tau_\e \to \tau_0$, where $\tau_0$ is given in \eqref{tau 0}, 
 so that the functions $w_\e(s): = v_\e\left(\e s \right)$, $s\in (A\e^{-1},B\e^{-1})$, converge weakly to the profile $w_0 := z(\cdot-\tau_0)$ in $H^1((-l,l))$ for any fixed $l > 0$, and satisfy
\[
w_\e(\tau_\e) = c_\e,
\]
where $c_\e$ is the central zero of $W' + \e \lambda_\e$. 

This follows from the proofs of Lemmas 4.18 and 4.19 in \cite{LeoniMurray} (see also \cite{DalMasoFonsecaLeoni}). We note that those proofs use significant machinery from that work, including detailed decay estimates, but do not require anything more than a Lipschitz estimate on $\eta$ near $0$ and \eqref{eta2}, \eqref{eta3}, and \eqref{eta4}.

\noindent{\bf Step 3.}   We claim that \eqref{liminf} holds. Define $\eta_\e(s): = \eta(s \e)$, $s\in (A\e^{-1},B\e^{-1})$. After changing variables, and setting $l_\e := C |\log \e|$, we obtain
\begin{align*}
G_\e^{(1)}(v_\e) &= \e^{-1}\int_{-l_\e}^{l_\e} (W^{1/2}(w_\e) - w_\e')^2 \eta_\e \ds +2\e^{-1}\int_{-l_\e}^{l_\e} W^{1/2}(w_\e) w_\e' (\eta_\e - \eta(0))  \ds \\
        &\quad+ \e^{-1}\int_{[A \e^{-1},B \e^{-1}] \backslash (-l_\e,l_\e)} \left(W(w_\e) + (w_\e')^2\right) \eta_\e \ds + \e^{-1}2\eta(0)\left(\int_{-l_\e}^{l_\e} W^{1/2}(w_\e) w_\e'  \ds - c_W\right) \\
        &\geq 2\e^{-1}\int_{-l_\e}^{l_\e} W^{1/2}(w_\e) w_\e' (\eta_\e - \eta(0))  \ds \\
        &\quad+ \e^{-1}\int_{[A \e^{-1},B \e^{-1}] \backslash (-l_\e,l_\e)} W(w_\e) \eta_\e \ds + \e^{-1}2\eta(0)\left(\int_{-l_\e}^{l_\e} W^{1/2}(w_\e) w_\e'  \ds - c_W\right) .
\end{align*}
The last term goes to zero, see 4.105 in \cite{LeoniMurray}. Following the proof of 4.106 in \cite{LeoniMurray}, the second to last term satisfies
\[
\lim_{\e \to 0^+} \e^{-1}\int_{[A \e^{-1},B \e^{-1}] \backslash (-l_\e,l_\e)} W(w_\e) \eta_\e \ds =\begin{cases}
\frac{\lambda_0^2}{2W''(a)}\int_I \eta \dt & \text{if }q=1,\\
0&\text{if }q<1.
\end{cases} 
\]

Finally, by \eqref{eta1} the function $\eta$ satisfies the following Taylor's formula: 
\[
\eta(t)= \eta(0) - t^- \eta'_-(0) + t^+ \eta'_+(0)|+ |t|R_1(t),\]
where $R_1(t)\to 0$ as $t\to 0$. Hence, we find that \begin{align*}
& 2\e^{-1}\int_{-l_\e}^{l_\e} W^{1/2}(w_\e(s)) w_\e' (s)(\eta_\e(s) - \eta(0))  \ds= 2\int_{-l_\e}^{l_\e} W^{1/2}(w_\e(s)) w_\e'(s)(- \eta'_-(0) s^- + \eta'_+(0) s^+) \ds
\\&+2\int_{-l_\e}^{l_\e} W^{1/2}(w_\e(s)) w_\e'(s)|s|R_1(\e s) \ds.
\end{align*}
As in \cite{LeoniMurray}, we now break the integrals over $[-l_\e,-l]$, $[-l,l]$, $[l,l_\e]$ for any fixed $l > 0$. Since by Step 2, 
$\{w_\e\}$ converges weakly to $z(\cdot-\tau_0)$ in $H^1((-l,l))$, we can follow the computations after formula (4.106) in \cite{LeoniMurray} using the exponential decay (see (4.95) and (4.96) in \cite{LeoniMurray}) in $[-l_\e,-l]$ and $[l,l_\e]$ to obtain that  
\begin{align*}
&\lim_{\e \to 0^+} 2\int_{-l_\e}^{l_\e} W^{1/2}(w_\e(s)) w_\e'(s)(- \eta'_-(0) s^- + \eta'_+(0) s^+) \ds \\
&= 2 \eta_-'(0) \int_{-\infty}^0 W^{1/2} (z(s-\tau_0)) z'(s-\tau_0) s\ds + 2 \eta_+'(0)  \int_0^\infty W^{1/2} (z(s-\tau_0)) z'(s-\tau_0) s\ds.
\end{align*}
Similarly, using the facts that $R_1(t)\to 0$ as $t\to 0$ and $\e |s|\le \e l_\e\le C\e|\log\e|$ for $|s|\le l_\e$, we can use Step 2 to show that  
\[\lim_{\e \to 0^+}2\int_{-l}^{l} W^{1/2}(w_\e(s)) w_\e'(s)|s|R_1(\e s) \ds=0,\]
while by Lemma 4.19 and (4.96) in \cite{LeoniMurray},
\begin{align*}
2\int_{l}^{l_\e} W^{1/2}(w_\e(s))| w_\e'(s)||s||R_1(\e s)| \ds\le 2\Vert R_1\Vert_{L^\infty(-\e l_\e, \e l_\e)} \Vert w_\e'\Vert_{\infty}
\int_{l}^{l_\e} W^{1/2}(w_\e(s)) |s| \ds\to 0
\end{align*}
as $\e\to 0^+$. A similar estimate holds in  $[-l_\e,-l]$. This  concludes the proof of \eqref{liminf}.

\end{proof}

\section{Proof of the Main Results}

Now we give a proof of Theorem \ref{theorem main}.

\begin{proof}[Proof of Theorem \ref{theorem main}] We only give the proof in the case $q=1$ in \eqref{WPrime_At_Wells}, the case $q<1$ being similar. 
Since $\mathcal{I}_\Omega\le \Iloc$ (see \eqref{eqn:iso-func} and \eqref{labelstar3}), reasoning as in Proposition 3.1 in \cite{LeoniMurray} we can construct a function $\mathcal{I}\in C(0,1)\cap C^1((0,\V_m])\cap C^1([\V_m,0))$  satisfying 
\begin{align}
\label{I1}&\Iloc\ge \mathcal{I}>0\text{ in }(0,1), \\
\label{I2}&\mathcal{I}(\V_m) = \Iloc(\V_m), \qquad 
\mathcal{I}'_\pm(\V_m) = (\Iloc)'_\pm(\V_m),\\
\label{I3}&\mathcal{I}(\V)=C_0\V^{\frac{n-1}{n}}\text{ for }\V\in (0,r),\qquad \mathcal{I}(\V)=C_0(1-\V)^{\frac{n-1}{n}}\text{ for }\V\in (1-r,1)
\end{align}	
for some constant $C_0>0$ and some $0<r<1/2$ small.	
Let $\eta := \mathcal{I} \circ V_\Omega$, where $V_\Omega$ satisfies
\begin{equation} \label{function V}
\frac{d}{dt} V_\Omega(t) = \mathcal{I}(V_\Omega(t)),\quad V_\Omega(0) = \V_m.
\end{equation}
As in the proof of Theorem 5.1 in \cite{MurrayRinaldi} one can show that $\eta$ satisfies all of the assumptions \eqref{eta1}--\eqref{eta5}.
	
Let $u_\e$ be a minimizer of $\mathcal{F}_\e$ and let 
$v_\e:=f_{u_\e}$ be the increasing function given in Remark 3.11 of \cite{LeoniMurray}.  
Following the proof of Theorem 5.1 in \cite{LeoniMurray}  (see also \cite{MurrayThesis} or \cite{MurrayRinaldi} for more details), we have that
\[
\frac{\mathcal{F}^{(1)}_\e(u_\e) - \min \mathcal{F}_0}{\e} \geq \frac{G^{(1)}_\e(v_\e) - 2c_W \eta(t_0)}{\e},
\]

Hence, by Theorem \ref{thm:1D}, we have that
\begin{align}
\liminf_{\e\to 0^+} \frac{\mathcal{F}^{(1)}_\e(u_\e) - \min \mathcal{F}_0}{\e} \geq \frac{\lambda_0^2(\delta)}{2W''(a)} \int_I \eta(t) \dt& +
2 \eta_-'(t_0) \int_{-\infty}^0 W^{1/2} (z(s-\tau_0(\delta))) z'(s-\tau_0(\delta)) s\ds\\ &+ 2 \eta_+'(t_0)  \int_0^\infty W^{1/2} (z(s-\tau_0(\delta))) z'(s-\tau_0(\delta)) s\ds,
\end{align}
where 
\begin{equation}\label{lambda delta}
\lambda_0(\delta) \in \left[\frac{2c_W\eta'_+(t_0)}{(b-a) \eta(t_0)},\frac{2c_W\eta'_-(t_0)}{(b-a) \eta(t_0)}\right]
\end{equation}
and $\tau_0(\delta)$ is given by
\begin{equation}\label{tau delta}
\eta(t_0)\int_\R z(s-\tau_0(\delta)) - sgn_{a,b}(s) \ds = \frac{\lambda_0(\delta)}{W''(a)} \int_I \eta(t) \dt.
\end{equation}
 By Proposition \ref{lem:delta-to-zero}, \eqref{I2}, and \eqref{function V}, we find that as $\delta \to 0$ the quantities $\eta'_-(t_0)$ and $\eta'_+(t_0)$ converge to the same value, namely, $(n-1)\kappa_{E_0}$, and hence by \eqref{lambda delta} and \eqref{tau delta}  we have that $\lambda_0(\delta) \to \lambda_u$ and $\tau_0(\delta) \to \tau_u$ converge as well. Thus by taking $\delta \to 0$ we obtain
\[
\liminf_{\e \to 0^+} \frac{\mathcal{F}^{(1)}_\e(u_\e) - \min \mathcal{F}_0}{\e} \geq \frac{2c_W^2(n-1)^2}{W''(a) (b-a)^2}\kappa_u^2 + 2(c_{sym} + c_W\tau_u) (n-1) \kappa_u P(\{u=a\};\Omega),
\]
which is the desired result.
\end{proof}

The proofs of Theorems \ref{theorem ac} and \ref{globalMotion} now follow from Theorems 1.2 and 1.7
and Theorem 1.4 in \cite{MurrayRinaldi}, respectively, with the only change that we apply Theorem \ref{theorem main} of this paper in place of Theorem 1.1. in \cite{LeoniMurray}.

\section*{Acknowledgments}
The authors warmly thank the Center for Nonlinear Analysis,  where part of this work was carried out.

\section*{Compliance with Ethical Standards}
Part of this research was carried out at Center for Nonlinear Analysis. The center is partially
supported by NSF Grant No. DMS-0635983 and NSF PIRE Grant No. OISE-0967140. 
The research of G. Leoni was partially funded by the NSF under Grant No. and DMS-1412095 and the one of R. Murray by NSF PIRE Grant No. OISE-0967140.

\bibliography{ModicaMortolaRefs}
\bibliographystyle{acm}

\end{document}